\newcommand{\cO}{\mathcal{O}}
\newtheorem{theorem}{Theorem}
\theoremstyle{definition}
\theoremstyle{remark}
\newtheorem{remark}[theorem]{Remark}
\newtheorem{proposition}{Proposition}
\newtheorem{corollary}{Corollary}
\newcommand{\tcb}[1]{\textcolor{black}{#1}}
\newcommand{\tcr}[1]{\textcolor{black}{#1}}
\journal{Journal of Computational Physics}
\begin{document}

\begin{frontmatter}

\title{
Fast spectral separation method for kinetic equation with anisotropic non-stationary collision operator retaining micro-model fidelity \\
} 
\author[1]{Yue Zhao}
\ead{zhaoyu14@msu.edu}
\address[1]{{Department of Computational Mathematics, Science \& Engineering, Michigan State University}, 428 S Shaw Ln, East Lansing 48824, MI, USA}
\author[1,2]{Huan Lei\corref{cor2}}
\cortext[cor2]{Corresponding author.}
\ead{leihuan@msu.edu}
\address[2]{{Department of Statistics \& Probability, Michigan State University}, 619 Red Cedar Road Wells Hall, East Lansing 48824, MI, USA}

\date{\today}


\begin{abstract}
In this paper, we present a generalized, data-driven collisional operator for one-component plasmas, learned from molecular dynamics simulations, to extend the collisional kinetic model beyond the weakly coupled regime.
The proposed operator features an anisotropic, non-stationary collision kernel that accounts for particle correlations typically neglected in classical Landau formulations.
To enable efficient numerical evaluation, we develop a fast spectral separation method that represents the kernel as a low-rank tensor product of univariate basis functions. 
This formulation admits an $\cO(N \log N)$ algorithm via fast Fourier transforms and preserves key physical properties, including discrete conservation laws and the H-theorem, through a structure-preserving central difference discretization. 
Numerical experiments demonstrate that the proposed model accurately captures plasma dynamics in the moderately coupled regime beyond the standard Landau model while maintaining high computational efficiency and structure-preserving properties.

\end{abstract}



\begin{keyword}
Kinetic equation, collision operator, data-driven modeling, non-stationary, spectral separation.
\end{keyword}

\end{frontmatter}


\section{Introduction}

Collisional kinetic theory provides an effective computational framework for modeling dynamical processes that bridge microscopic particle interactions with macroscopic transport phenomena in fields such as plasma physics, gas dynamics, and astrophysics. Besides the Vlasov term, the governing kinetic equation includes a collision operator that captures the unresolved particle interactions beyond the mean-field approximation. A widely adopted choice is the Landau collision operator \cite{landau1937kinetic}, which models binary particle collisions under long-range (e.g., Coulomb) interactions through a three-dimensional integro-differential equation.  It can be formally derived as a limiting case of more detailed models, such as the five-dimensional Boltzmann collision operator \cite{boltzmann1872weitere, wild1951boltzmann} in the grazing collision limit, or the seven-dimensional Balescu–Lenard operator \cite{lenard1960bogoliubov, balescu1960irreversible} under the assumption of an isotropic dielectric response. The Landau operator provides an accurate description of plasma kinetics in the weakly coupled regime and strictly preserves conservation laws as well as the H-theorem. 

The numerical solution of the Landau equation is challenging due to its intrinsic physical structure, high-dimensional integral formulation, and nonlinear stiffness. Substantial effort has been devoted to developing efficient and accurate numerical methods for this equation. Positivity-preserving schemes have been developed for both the linearized Fokker-Planck equation \cite{chang1970practical} and the full nonlinear form \cite{larsen1985discretization}. The entropy-based schemes proposed in \cite{buet1999numerical,degond1994entropy} are designed to rigorously preserve key physical quantities while ensuring non-negative entropy dissipation. 
However, direct implementation of these schemes can be computationally expensive due to the high-dimensional integral form of the collision operator. To address this, several fast evaluation techniques have been developed. These include the multigrid algorithms \cite{buet1997fast}, multipole expansions \cite{lemou1998multipole}, the Fourier spectral method \cite{pareschi2000fast,filbet2002numerical,zhang2017conservative} that exploit the convolution structure of the Landau kernel, the \tcr{particle-based method \cite{Carrillo_JCP_2020, carrillo2021random, bailo2024collisional}} with regularized entropy functional, as well as the approaches \cite{chacon2000implicit,taitano2016adaptive} utilizing the Rosenbluth formulation \cite{Rosenbluth_PR_1957} via Poisson solvers. 
Recent works developed \tcr{Hermite spectral based methods \cite{wang2019approximation,li2020approximation,li2021hermite}} that efficiently evaluate the spectral coefficients for the quadratic collision terms. To handle the stiffness arising from the nonlinear structure of the equation, implicit schemes \cite{epperlein1994implicit,chacon2000implicit,lemou2005implicit,taitano2016adaptive} and the asymptotic-preserving approaches \cite{filbet2010class,jin2011class} have been proposed to accelerate the convergence and improve stability in regimes with disparate temporal scales.

Despite the significant theoretical and computational advances, the applicability of the Landau collisional operator remains limited to the weakly coupled regime, where the ratio between the kinetic and potential energy is much larger than one. Due to the assumption of small-angle scattering and uncorrelated particle interactions, the operator exhibits limitations in modeling the kinetic processes in regimes where particle correlations become non-negligible. This limitation poses severe challenges for realistic applications such as inertial confinement fusion \cite{Rinderknecht_kinetic_plasma_reivew_2018, Indirect_drive_ICF_PRL_1_2024} and photon scattering \cite{Dharma_Perrot_PRE_1998} that span a broader range of physical regimes. To overcome these challenges, we proposed a generalized data-driven collision operator \cite{zhao2025data} directly learned from the microscale molecular dynamics (MD) simulations. In contrast to the Landau form with an isotropic and stationary collision kernel, the proposed collision operator takes a generalized metriplectic structure \cite{morrison1986paradigm} with an anisotropic kernel. In particular, the kernel takes a non-stationary form that depends not only on the relative velocity but also on the average velocity of the colliding pair. This generalized formulation strictly preserves the frame-indifference and various physical constraints. Moreover, it enables us to faithfully account for the heterogeneous energy transfer in the plane perpendicular to the relative velocity. This heterogeneity effect arises from the collective interactions between the pair of collision particles and the surrounding environment, which have been broadly overlooked in most existing forms but prove crucial for capturing the plasma kinetics beyond the weakly coupled regime \cite{zhao2025data}, where the Landau model shows limitations.  

While the proposed generalized operator significantly extends the modeling capability of collisional kinetics, one remaining challenge is the efficient evaluation of the new collision operator. Due to the anisotropic and non-stationary form of the kernel, the fast approximation based on the convolution structure or the Rosenbluth formulation can not be directly applied; direct evaluation requires $\cO(N^2)$ computational complexity, where $N$ denotes the number of velocity grid points. In this study, we aim to fill the gap by developing an efficient approach based on the fast spectral separation (SS) method. The main idea is to seek a spectral representation of the anisotropic collision kernel as the low-rank tensor product of a set of univariate basis functions. Each basis function depends on a scalar argument, such as the magnitude of the velocity of one of the colliding particles or their relative velocity.
This tensor representation strictly preserves the symmetry constraints and enables spectral separation of the individual and relative velocity dependencies.  By embedding the kernel representation into the bilinear metriplectic form of the collision operator, we demonstrate that each integral term inherits a convolution structure, and therefore achieves $\cO(N\log N)$ computational complexity via the fast Fourier transform (FFT) method. The tensor representation in the form of the univariate basis can be directly learned from the MD data, allowing the collision model to faithfully capture the effects of micro-scale particle correlations beyond the Landau model. Meanwhile, the constructed model strictly satisfies the conservation laws and the H-theorem, which enables the development of structure-preserving numerical schemes for the present collisional kinetic equation following the essential ideas for the Landau model. Numerical results show that the present model improves the Landau equation in comparison with full MD simulation results, and achieves efficient computational complexity with structure-preserving numerical solution. 

The rest of the manuscript is organized as follows.
In Section \ref{sec:method}, we introduced the generalized collision operator structure with an anisotropic non-stationary kernel, the spectral separation representation and fast approximation method, as well as the structure-preserving discretization scheme. 
Section \ref{sec:numerical} presents the numerical results that demonstrate the accuracy of the collision operator, the computational efficiency of the fast approximation method, and the convergence of the collisional kinetic model. Summary is given in Section \ref{sec:summary}.






\section{Methods}\label{sec:method}

The collisional Vlasov-Poisson equation takes a general form as
\begin{equation}\label{eq:VP}
	\dfrac{\partial f}{\partial t}(\boldsymbol{z},t) +\boldsymbol{v} \cdot \dfrac{\partial f}{\partial \boldsymbol{x}} - \dfrac{\partial \phi}{\partial \boldsymbol{x}}(\boldsymbol{x}; f) \cdot \dfrac{\partial f}{\partial \boldsymbol{v}} =\left.\dfrac{\partial f}{\partial t}\right|_{c},
\end{equation}
where $f:\mathbb{R}^6\times \mathbb{R}_{+} \to \mathbb{R}_{+}$ is the single-particle probability density function in the phase space and $\boldsymbol{z}=(\boldsymbol{x},\boldsymbol{v})$ represents a phase space point with position $\boldsymbol{x} \in \mathbb{R}^3$ and $\boldsymbol{v} \in \mathbb{R}^3$.  $\phi(\boldsymbol{x} ;f)=\int V(\boldsymbol{x},\boldsymbol{x}')f(\boldsymbol{z}')\mathrm{d}\boldsymbol{z}'$ is the mean field potential energy with pair potential $V(\boldsymbol{x}, \boldsymbol{x}')$.
The collision term $\left.\dfrac{\partial f}{\partial t}\right|_{c}$ represents the unresolved particle interactions beyond the mean field approximation. In this work, we focus on the spatially homogeneous scenario and abusively use $f(\boldsymbol{v}, t)$ to denote the velocity distribution function for the remainder of this work; the kinetic equation reduces to
\begin{equation}\label{eq:collision}
	\dfrac{\partial f}{\partial t}(\boldsymbol{v},t) = \left.\dfrac{\partial f}{\partial t}\right|_{c}.
\end{equation}
In principle, the collision term can be derived from the Liouville equations with the BBGKY hierarchy \cite{kirkwood1946statistical, kirkwood1947statistical, bogoliubov1947kinetic, bhatnagar1954model}. However, the derived form is generally unclosed and depends on the higher-order correlation functions. In practice, various empirical closed forms $C[f]$, such as the Landau, BGK, and Boltzmann collision operators, are often introduced based on different heuristic approximations. In this work, we choose a different pathway and aim to construct a generalized collision operator directly from the micro-scale MD data.

\subsection{Generalized anisotropic and non-stationary collision operator}
\label{sec:generalized_kernel}
To capture the anisotropic nature of energy transfer in the velocity space, we represent the generalized collision operator with the metriplectic structure \cite{morrison1986paradigm} in the form of 
\begin{equation}\label{eq:collision_symplectic}
    \dfrac{\partial f}{\partial t}(\boldsymbol{v},t) = \nabla \cdot \int \boldsymbol{\omega}(\boldsymbol{v}, \boldsymbol{v}') \left[ f(\boldsymbol{v}') \nabla f(\boldsymbol{v}) - f(\boldsymbol{v}) \nabla' f(\boldsymbol{v}') \right] \mathrm{d}\boldsymbol{v}',
\end{equation}
\tcb{which can be constructed by the dissipative bracket $(f, \mathcal{S})_{+}$ with the entropy $\mathcal{S} = -\int f\ln f \mathrm{d}\boldsymbol{v}$ and 
$(g, h)_{+} := \frac{1}{2}
\int \int [\nabla \frac{\delta g}{\delta f} - \nabla' \frac{\delta g}{\delta f}] f f' \bm\omega(\bm v, \bm v') [\nabla \frac{\delta h}{\delta f} - \nabla' \frac{\delta h}{\delta f}] \mathrm{d}\boldsymbol{v} \mathrm{d}\boldsymbol{v}'
$.} In particular, $\boldsymbol{\omega}: \mathbb{R}^3\times\mathbb{R}^3 \to \mathbb{S}^{3}_{+}$ is a collision kernel representing the unresolved particle interactions. As shown later, $\boldsymbol{\omega}(\boldsymbol{v}, \boldsymbol{v}')$ takes a generalized anisotropic non-stationary structure and differs from the stationary form $\boldsymbol{\omega}(\boldsymbol{u})$ where $\boldsymbol{u} = \boldsymbol{v} - \boldsymbol{v}'$. As a special case, by choosing $\boldsymbol{\omega}(\boldsymbol{v}, \boldsymbol{v}') \propto \vert\boldsymbol{u}\vert^{-1}\boldsymbol{\mathcal{P}}$ and $\boldsymbol{\mathcal{P}} = (\boldsymbol{I} - \boldsymbol{u}\boldsymbol{u}^T/\vert \boldsymbol{u}\vert ^{2})$, the operator recovers the standard Landau model with an isotropic energy transfer  ($\propto \vert\boldsymbol{u}\vert^{-1}$) in the plane perpendicular to $\boldsymbol{u}$.

To construct a generalized form, the collision kernel $\boldsymbol{\omega}(\boldsymbol{v}, \boldsymbol{v}')$ needs to satisfy certain properties such that the collision operator in the form of Eq. \eqref{eq:collision_symplectic} satisfies the frame-indifference and physical constraints. \tcr{Specifically, 
the frame-indifference means that the evolution of the dynamic equations remains invariant under rigid motions (translation and rotation) of the coordinate system and velocity reflections:
\begin{equation*}
    \begin{aligned}
        \left.\dfrac{\partial \tilde{f}}{\partial t}\right|_{c} (\tilde{\boldsymbol{x}},\tilde{\boldsymbol{v}},t) &= \left.\dfrac{\partial f}{\partial t}\right|_{c} (\boldsymbol{x},\boldsymbol{v},t), \quad \tilde{\boldsymbol{x}} = \mathcal{U}\boldsymbol{x} + \boldsymbol{x}_{0}, ~ \tilde{\boldsymbol{v}} = \mathcal{U}\boldsymbol{v}, \\
        \left.\dfrac{\partial f^*}{\partial t}\right|_{c} (\boldsymbol{v}^*,t) &= \left.\dfrac{\partial f}{\partial t}\right|_{c} (\boldsymbol{v},t), \quad\quad \boldsymbol{v}^* = -\boldsymbol{v},
    \end{aligned}
\end{equation*}
where $\boldsymbol{x}_0$ and $\mathcal{U} \in {\rm SO}(3)$ specify the translation and rotation.}

\begin{proposition}
\label{prop:kernel_condition}
Let the collision kernel $\boldsymbol{\omega}(\boldsymbol{v},\boldsymbol{v}')$ be a symmetric positive semi-definite kernel and satisfy the following properties: permutation invariance, rotational symmetry, and orthogonality to the relative velocity $\boldsymbol{u} = \boldsymbol{v} - \boldsymbol{v}'$, i.e., 
\begin{equation}\label{eq:kernel_conditions}
    \begin{aligned}
        \boldsymbol{\omega}(\mathcal{U}\boldsymbol{v},\mathcal{U}\boldsymbol{v}') &= \mathcal{U}\boldsymbol{\omega}(\boldsymbol{v},\boldsymbol{v}')\mathcal{U}^T,~  \mathcal{U} \in {\rm SO}(3)\\
        \boldsymbol{\omega}(\boldsymbol{v},\boldsymbol{v}') &= \boldsymbol{\omega}(\boldsymbol{v}',\boldsymbol{v}) \\
        \boldsymbol{\omega}(\boldsymbol{v},\boldsymbol{v}')(\boldsymbol{v} -\boldsymbol{v}') &= \boldsymbol{0} .
    \end{aligned}
\end{equation}
Then the kinetic model \eqref{eq:collision_symplectic} strictly conserves the mass, momentum, and energy, preserves the frame indifference, and satisfies the H-theorem.     
\end{proposition}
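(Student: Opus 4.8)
The plan is to verify each conserved quantity and the H-theorem by testing the collision operator \eqref{eq:collision_symplectic} against an appropriate function $\psi(\bm{v})$ and exploiting the stated symmetries of $\bm{\omega}$. First I would establish the weak (variational) form: for any smooth test function $\psi$,
\begin{equation}\label{eq:weakform_plan}
    \int \psi(\bm{v})\, \partial_t f \,\mathrm{d}\bm{v} = -\int\!\!\int \nabla\psi(\bm{v}) \cdot \bm{\omega}(\bm{v},\bm{v}') \big[ f(\bm{v}')\nabla f(\bm{v}) - f(\bm{v})\nabla' f(\bm{v}') \big] \,\mathrm{d}\bm{v}'\mathrm{d}\bm{v},
\end{equation}
obtained by integration by parts in $\bm{v}$ (boundary terms vanish under the usual decay assumptions on $f$). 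Then I would symmetrize the right-hand side by swapping the dummy variables $\bm{v}\leftrightarrow\bm{v}'$ and using the permutation invariance $\bm{\omega}(\bm{v},\bm{v}')=\bm{\omega}(\bm{v}',\bm{v})$; averaging the two expressions yields
\begin{equation}\label{eq:symmform_plan}
    \int \psi\, \partial_t f \,\mathrm{d}\bm{v} = -\frac12\int\!\!\int \big[\nabla\psi(\bm{v}) - \nabla'\psi(\bm{v}')\big] \cdot \bm{\omega}(\bm{v},\bm{v})' f(\bm{v})f(\bm{v}')\Big[\frac{\nabla f(\bm{v})}{f(\bm{v})} - \frac{\nabla' f(\bm{v}')}{f(\bm{v}')}\Big]\,\mathrm{d}\bm{v}'\mathrm{d}\bm{v}.
\end{equation}
(The sign bookkeeping here is the first place to be careful.)

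With \eqref{eq:symmform_plan} in hand, the conservation laws follow by choosing $\psi=1,\ \bm{v},\ |\bm{v}|^2$. For $\psi=1$, $\nabla\psi=0$ and the integrand vanishes identically, giving mass conservation. For $\psi=\bm{v}$ (componentwise), $\nabla\psi-\nabla'\psi' = \bm I - (-\bm I)$... more precisely $\nabla_{\bm v}v_i - \nabla_{\bm v'}v_i' = \bm e_i - \bm 0$ must be re-examined: the correct combination appearing is $\nabla_{\bm v}\psi(\bm v) - \nabla_{\bm v'}\psi(\bm v')$; for the momentum components this is a constant vector independent of the pair, and summing the $i$-th component contracted with $\bm\omega$ reduces — after the swap symmetrization — to a term proportional to $\bm\omega(\bm v,\bm v')\bm u$, which vanishes by the orthogonality condition $\bm\omega(\bm v,\bm v')(\bm v-\bm v')=\bm 0$. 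For $\psi=|\bm v|^2$, one gets $\nabla\psi(\bm v)-\nabla'\psi(\bm v') = 2\bm v - (-2\bm v') $; again the relevant symmetrized quantity is $2(\bm v - \bm v') = 2\bm u$, so the contraction with $\bm\omega$ again vanishes by orthogonality. Thus energy is conserved. I would also note that orthogonality plus positive semidefiniteness means $\bm\omega$ is really supported on the plane perpendicular to $\bm u$, which is the structural fact doing all the work.

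For the H-theorem, I take $\psi = \log f + 1$ (so $\nabla\psi = \nabla f / f$), substitute into \eqref{eq:symmform_plan}, and obtain
\begin{equation}\label{eq:Htheorem_plan}
    \frac{\mathrm{d}}{\mathrm{d}t}\int f\log f\,\mathrm{d}\bm v = -\frac12\int\!\!\int f(\bm v)f(\bm v')\, \bm q^{\!\top}\bm\omega(\bm v,\bm v')\,\bm q \,\mathrm{d}\bm v'\mathrm{d}\bm v \le 0,
\end{equation}
where $\bm q = \nabla f(\bm v)/f(\bm v) - \nabla' f(\bm v')/f(\bm v')$, and the inequality is immediate from the positive semidefiniteness of $\bm\omega$. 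Equality characterizes the equilibria: $\bm q$ must lie in the kernel of $\bm\omega$ for a.e. pair, i.e. $\nabla\log f(\bm v) - \nabla'\log f(\bm v')$ is parallel to $\bm u$, which (combined with rotational symmetry, so that $\bm\omega$ is full rank on $\bm u^\perp$) forces $\nabla\log f$ to be an affine function of $\bm v$, i.e. $f$ is Maxwellian. Finally, for frame indifference I would check that under a rigid rotation $\bm v\mapsto\mathcal U\bm v$, $f\mapsto f(\mathcal U^{\!\top}\cdot)$, the operator \eqref{eq:collision_symplectic} transforms covariantly: $\nabla\mapsto\mathcal U\nabla$, $\mathrm{d}\bm v'$ is invariant, and the rotational symmetry $\bm\omega(\mathcal U\bm v,\mathcal U\bm v') = \mathcal U\bm\omega(\bm v,\bm v')\mathcal U^{\!\top}$ makes the three $\mathcal U$ factors telescope, leaving the operator equivariant; Galilean invariance under $\bm v\mapsto\bm v+\bm c$ follows since $\bm\omega$ enters only through $\bm v,\bm v'$ and the relevant contractions reduce to functions of $\bm u$, though if $\bm\omega$ genuinely depends on $\bm v+\bm v'$ this should be stated as rotational frame-indifference only. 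The main obstacle I anticipate is not any single step but the careful sign/index bookkeeping in the symmetrization \eqref{eq:symmform_plan} and making the equilibrium characterization in the H-theorem rigorous — specifically arguing that the pointwise kernel condition on $\bm q$ genuinely forces the Maxwellian form rather than merely being necessary.
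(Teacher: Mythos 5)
Your proposal follows essentially the same route as the paper: integrate the collision operator against a test function, symmetrize by swapping $\bm{v}\leftrightarrow\bm{v}'$ using the permutation symmetry of $\bm{\omega}$, kill the conserved-quantity terms via the orthogonality $\bm{\omega}(\bm{v},\bm{v}')\bm{u}=\bm{0}$ (which is only needed for $\psi=|\bm{v}|^2$, where the symmetrized gradient difference equals $2\bm{u}$; for $\psi=v_i$ that difference is identically zero, so your appeal to orthogonality there is a harmless bookkeeping slip), and deduce the H-theorem from positive semi-definiteness with $\bm{B}=\nabla\log f(\bm{v})-\nabla'\log f(\bm{v}')$. This is exactly the paper's (more tersely stated) symmetrized weak-form argument; your added discussion of rotational equivariance and the Maxwellian equilibrium characterization goes beyond what the paper writes out but is consistent with it.
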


\begin{proof}
    The total mass, momentum and kinetic energy are 
    \begin{equation}\label{eq:MPK}
        m = \int f \mathrm{d} \boldsymbol{v}, ~ \boldsymbol{p} = \int \boldsymbol{v}f \mathrm{d} \boldsymbol{v}, ~ K = \frac{1}{2}\int \boldsymbol{v}^2 f \mathrm{d} \boldsymbol{v}.
    \end{equation}
    For any physical quantity $\phi$, we have
    \begin{equation}\label{eq:conservation}
        \begin{aligned}
            \dfrac{\mathrm{d}}{\mathrm{d} t} \int \phi(\boldsymbol{v}) f \mathrm{d} \boldsymbol{v} &= \int \phi(\boldsymbol{v}) \left(\nabla \cdot \int \boldsymbol{\omega} (\boldsymbol{v},\boldsymbol{v}') \left[ f(\boldsymbol{v}') \nabla f(\boldsymbol{v}) - f(\boldsymbol{v}) \nabla' f(\boldsymbol{v}') \right] \mathrm{d}\boldsymbol{v}'\right) \mathrm{d}\boldsymbol{v}\\
            &= -\dfrac{1}{2}\iint \left(\nabla_{\boldsymbol{v}} \phi(\boldsymbol{v}) - \nabla_{\boldsymbol{v}'} \phi(\boldsymbol{v}')\right) \boldsymbol{\omega}(\boldsymbol{v},\boldsymbol{v}') \left[ f(\boldsymbol{v}') \nabla f(\boldsymbol{v}) - f(\boldsymbol{v}) \nabla' f(\boldsymbol{v}') \right] \mathrm{d}\boldsymbol{v}' \mathrm{d}\boldsymbol{v},
        \end{aligned}
    \end{equation}
    so the quantities $(m,\boldsymbol{p},K)$ are conserved.
    The entropy of the system and its evolution are
    \begin{equation}\label{eq:Hthm}
        \begin{aligned}
            S(f)&=-\int f \log f \mathrm{d}\boldsymbol{v},\\
            \dfrac{\mathrm{d}S}{\mathrm{d}t} &= \dfrac{1}{2}\iint  \boldsymbol{B}^{T} \boldsymbol{\omega}(\boldsymbol{v},\boldsymbol{v}') \boldsymbol{B} f(\boldsymbol{v})f(\boldsymbol{v}') \mathrm{d}\boldsymbol{v}' \mathrm{d}\boldsymbol{v} \geq 0,
        \end{aligned}
    \end{equation}
    where $\boldsymbol{B} = \nabla_{\boldsymbol{v}} \log f(\boldsymbol{v}) - \nabla_{\boldsymbol{v}'} \log f(\boldsymbol{v}')$. Furthermore, it ensures a non-negative solution and admits the Maxwellian distribution as the equilibrium state. In addition, we can show that the collision kernel needs to satisfy the symmetry condition $\boldsymbol{\omega}(\boldsymbol{v},\boldsymbol{v}') = \boldsymbol{\omega}(-\boldsymbol{v},-\boldsymbol{v}')$.
\end{proof}

Based on Proposition \ref{prop:kernel_condition}, we propose a generalized anisotropic and non-stationary collision kernel \tcb{admitting a symmetry-breaking form that not only depends on $\boldsymbol{v} - \boldsymbol{v}'$ but also the collective motion $\boldsymbol{v} + \boldsymbol{v}'$ to capture the broadly overlooked heterogeneous nature of the collisional energy transfer. Meanwhile, it strictly} satisfies Eq. \eqref{eq:kernel_conditions} in the form of 
\begin{equation}\label{eq:CM2}
    \boldsymbol{\omega} (\boldsymbol{v}, \boldsymbol{v}') = \boldsymbol{\mathcal{P}} \left(g_{r}^2 \widetilde{\boldsymbol{r}}\widetilde{\boldsymbol{r}}^T + g_{s}^2 \widetilde{\boldsymbol{s}}\widetilde{\boldsymbol{s}}^T\right) \boldsymbol{\mathcal{P}},
\end{equation}
where $\boldsymbol{r}=\boldsymbol{v}+\boldsymbol{v}' - 2\bar{\boldsymbol{v}}$, $\boldsymbol{s}=\boldsymbol{u}\times\boldsymbol{r}$ and $\boldsymbol{\mathcal{P}}=\boldsymbol{I}-\boldsymbol{u}\boldsymbol{u}^T/|\boldsymbol{u}|^2$. $\bar{\boldsymbol{v}} = m^{-1} \boldsymbol{p}$ is the average velocity. In this work, we assume $\bar{\boldsymbol{v}} \equiv 0$ for simplicity.  Furthermore, we denote $\widetilde{\boldsymbol{u}}=\boldsymbol{u}/|\boldsymbol{u}|$, $\widetilde{\boldsymbol{r}}=\boldsymbol{\mathcal{P}}\boldsymbol{r}/|\boldsymbol{\mathcal{P}}\boldsymbol{r}|$ and $\widetilde{\boldsymbol{s}}=\boldsymbol{s}/|\boldsymbol{s}|$ as mutually orthogonal unit vectors, satisfying $\widetilde{\boldsymbol{u}} \widetilde{\boldsymbol{u}}^{T} + \widetilde{\boldsymbol{r}} \widetilde{\boldsymbol{r}}^{T} + \widetilde{\boldsymbol{s}} \widetilde{\boldsymbol{s}}^{T} = \boldsymbol{I}$.  The functions $g_{r}$ and $g_{s}$ represent the magnitude of the energy transfer along $\widetilde{\boldsymbol{r}}$ and $\widetilde{\boldsymbol{s}}$ directions. They should be rotational invariant and therefore constructed as $g_r\left (\vert \boldsymbol{u}\vert, \vert \boldsymbol{r}\vert, \vert \boldsymbol{s}\vert \right)$ and $g_s\left (\vert \boldsymbol{u}\vert, \vert \boldsymbol{r}\vert, \vert \boldsymbol{s}\vert \right)$, \tcb{preserving the frame-indifference condition \eqref{eq:kernel_conditions}}.

We emphasize that the anisotropic form of the new collision kernel does not violate the frame-indifference constraint. In particular, the solution of the new kinetic model strictly preserves the isotropy of the initial condition. 
\begin{corollary}
\label{cor:radial_symmetry}
If $f(\boldsymbol{v}, 0)$ is isotropic, the solution $f(\boldsymbol{v}, t)$ of model  \eqref{eq:collision_symplectic} \eqref{eq:CM2} remains isotropic for $t > 0$.   
\end{corollary}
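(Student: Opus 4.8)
The plan is to deduce the corollary from the rotational covariance of the collision operator together with uniqueness of the initial value problem; this is exactly the frame-indifference statement quoted in the paragraph preceding the corollary, specialized to radially symmetric data. Write the right-hand side of \eqref{eq:collision_symplectic} as $Q[f](\bm{v})$, and for a fixed $\mathcal{U}\in{\rm SO}(3)$ introduce the rotated density $f_{\mathcal{U}}(\bm{v},t):=f(\mathcal{U}\bm{v},t)$. The key identity is
\[
    Q[f_{\mathcal{U}}](\bm{v}) = Q[f](\mathcal{U}\bm{v}),
\]
i.e.\ $Q$ commutes with the ${\rm SO}(3)$ action on velocity space. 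Granting this, $f_{\mathcal{U}}$ solves the same equation as $f$, and since $f(\bm{v},0)$ is isotropic we have $f_{\mathcal{U}}(\bm{v},0)=f(\mathcal{U}\bm{v},0)=f(\bm{v},0)$; uniqueness for \eqref{eq:collision_symplectic}--\eqref{eq:CM2} then forces $f_{\mathcal{U}}\equiv f$ for every $t>0$ and every $\mathcal{U}$. Because ${\rm SO}(3)$ acts transitively on spheres $\{|\bm{v}|=R\}\subset\mathbb{R}^3$, this means $f(\cdot,t)$ depends only on $|\bm{v}|$, which is the claimed isotropy.

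To establish the covariance identity I would change variables directly. Using $\nabla f_{\mathcal{U}}(\bm{v})=\mathcal{U}^{T}(\nabla f)(\mathcal{U}\bm{v})$, substitute into the defining integral of $Q[f_{\mathcal{U}}]$, perform the substitution $\bm{w}'=\mathcal{U}\bm{v}'$ (Jacobian $1$ since $\mathcal{U}\in{\rm SO}(3)$), and use the first line of \eqref{eq:kernel_conditions} with the rotation $\mathcal{U}^{T}$ to write $\bm{\omega}(\bm{v},\mathcal{U}^{T}\bm{w}')=\mathcal{U}^{T}\bm{\omega}(\mathcal{U}\bm{v},\bm{w}')\mathcal{U}$. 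After cancelling $\mathcal{U}\mathcal{U}^{T}=\bm{I}$, the inner flux integral equals $\mathcal{U}^{T}\bm{G}(\mathcal{U}\bm{v})$, where $\bm{G}(\bm{v})=\int\bm{\omega}(\bm{v},\bm{w}')\left[f(\bm{w}')\nabla f(\bm{v})-f(\bm{v})\nabla f(\bm{w}')\right]\mathrm{d}\bm{w}'$ is the flux vector of $Q[f]$. Finally, orthogonality of $\mathcal{U}$ gives $\nabla_{\bm{v}}\cdot\left(\mathcal{U}^{T}\bm{G}(\mathcal{U}\bm{v})\right)=(\nabla\cdot\bm{G})(\mathcal{U}\bm{v})=Q[f](\mathcal{U}\bm{v})$. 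Only the rotational symmetry line of \eqref{eq:kernel_conditions} enters; the permutation and orthogonality conditions are not needed for this corollary, although \eqref{eq:CM2} does of course satisfy them.

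I expect the analytical input --- uniqueness --- to be the real obstacle rather than the algebra, since the kernel \eqref{eq:CM2} is singular at $\bm{u}=\bm{0}$ (through $\bm{\mathcal{P}}$ and the normalizations defining $\widetilde{\bm{r}},\widetilde{\bm{s}}$), so a fully rigorous proof would need a well-posedness theory in an appropriate function class, parallel to what is available for the Landau equation. A way to avoid assuming global uniqueness is a continuation argument: the covariance identity shows that if $f(\cdot,t)$ is isotropic then $Q[f(\cdot,t)]$, hence $\partial_{t}f(\cdot,t)$, is isotropic, so radial symmetry is propagated by the flow; one can make this precise through a Picard iteration in which each successive iterate inherits radial symmetry from the previous one. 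If only a formal statement is intended, local-in-time uniqueness for smooth data already yields the corollary from the covariance identity.
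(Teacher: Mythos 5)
Your proposal is correct, and its computational core --- the change of variables $\bm{v}'\mapsto\mathcal{U}\bm{v}'$ combined with the equivariance $\bm{\omega}(\mathcal{U}\bm{v},\mathcal{U}\bm{v}')=\mathcal{U}\bm{\omega}(\bm{v},\bm{v}')\mathcal{U}^T$ and the orthogonality of $\mathcal{U}$ --- is exactly the calculation in the paper's proof. The packaging of the final step differs slightly: the paper does not introduce a rotated solution or invoke uniqueness at all; it simply evaluates $C[f](\cdot,0)$ at two rotated points $\bm{v}_1=\mathcal{U}\bm{v}_2$ for an isotropic $f$, uses the isotropy of $f$ inside the integrand (so that $f(\mathcal{U}\bm{v}_2')=f(\bm{v}_2')$ and $\nabla f(\mathcal{U}\bm{v}_2)=\mathcal{U}\nabla f(\bm{v}_2)$), and concludes $C[f](\bm{v}_1,0)=C[f](\bm{v}_2,0)$ --- i.e.\ the time derivative of an isotropic state is isotropic, with the propagation to $t>0$ left formal. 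You instead prove the cleaner general identity $Q[f\circ\mathcal{U}]=Q[f]\circ\mathcal{U}$ for arbitrary $f$ and close the argument via uniqueness of the initial value problem; this is the more standard and, in principle, more rigorous route, but it imports a well-posedness hypothesis that neither the paper nor your proposal establishes (you correctly flag this, and your fallback continuation/Picard remark is essentially the paper's formal propagation argument made explicit). In short: same key lemma, with your version buying a sharper logical structure at the cost of an unproven uniqueness assumption, and the paper's version buying brevity at the cost of remaining formal; you are also right that only the rotational-covariance line of \eqref{eq:kernel_conditions} is needed.
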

\begin{proof}
Let us consider two points $\boldsymbol{v}_1 = \mathcal{U} \boldsymbol{v}_2$, where $\mathcal{U} \mathcal{U}^T = \boldsymbol{I}$. It is easy to show 
\begin{equation*}
\begin{split}
C[f](\boldsymbol{v}_1,0) &= \nabla_{\boldsymbol{v}_1} \cdot \int \boldsymbol\omega(\boldsymbol{v}_1, \boldsymbol{v}_1') \left[ f(\boldsymbol{v}_1') \nabla f(\boldsymbol{v}_1) - f(\boldsymbol{v}_1) \nabla' f(\boldsymbol{v}_1') \right] \mathrm{d}\boldsymbol{v}_1' \\
&= (\mathcal{U}  \nabla_{\boldsymbol{v}_2}) \cdot \int \boldsymbol\omega(\mathcal{U} \boldsymbol{v}_2, \mathcal{U}\boldsymbol{v}_2') \left[ f(\boldsymbol{v}_2') \mathcal{U} \nabla f(\boldsymbol{v}_2) - f(\boldsymbol{v}_2) \mathcal{U} \nabla' f(\boldsymbol{v}_2') \right] \mathrm{d}\boldsymbol{v}_2' \\
&= (\mathcal{U}  \nabla_{\boldsymbol{v}_2}) \cdot \int \mathcal{U} \boldsymbol\omega( \boldsymbol{v}_2, \boldsymbol{v}_2') \mathcal{U}^T \left[ f(\boldsymbol{v}_2') \mathcal{U} \nabla f(\boldsymbol{v}_2) - f(\boldsymbol{v}_2) \mathcal{U} \nabla' f(\boldsymbol{v}_2') \right] \mathrm{d}\boldsymbol{v}_2' \\
&\equiv C[f](\boldsymbol{v}_2, 0),
\end{split}    
\end{equation*}
\end{proof}

As shown in Ref. \cite{zhao2025data}, the generalized collision operator \eqref{eq:collision_symplectic} \eqref{eq:CM2} can capture the heterogeneous energy transfer arising from the collective interactions between pairs of collision particles and the environment and improve the Landau collision operator in a significantly broader range of physical conditions. However, the kernel $\omega(\boldsymbol{v}, \boldsymbol{v}')$ in Eq. \eqref{eq:CM2} is non-stationary and therefore does not preserve the convolution structure. The direct numerical evaluation of the collision operator takes $O(N^2)$ computational complexity per timestep.

\subsection{Spectral separation representation}
To address the aforementioned challenge, we seek a new form of the kernel based on the spectral separation (SS) representation. Since $\widetilde{\boldsymbol{u}} \widetilde{\boldsymbol{u}}^{T} + \widetilde{\boldsymbol{r}} \widetilde{\boldsymbol{r}}^{T} + \widetilde{\boldsymbol{s}} \widetilde{\boldsymbol{s}}^{T} = \boldsymbol{I}$, $\boldsymbol{\mathcal{P}}\boldsymbol{u}=\boldsymbol{0}$ and $\boldsymbol{\mathcal{P}}\boldsymbol{\mathcal{P}}=\boldsymbol{\mathcal{P}}$, the collision kernel $\boldsymbol{\omega}(\boldsymbol{v}, \boldsymbol{v}')$ in Eq. \eqref{eq:CM2} can be rewritten as
\begin{equation}\label{eq:CM2_sep}
	\boldsymbol{\omega} (\boldsymbol{v}, \boldsymbol{v}') = g_{1}^{2} |\boldsymbol{\mathcal{P}}\boldsymbol{r}|^{2} \boldsymbol{\mathcal{P}} + (g_{2}^{2}-g_{1}^{2}) \boldsymbol{\mathcal{P}} \boldsymbol{r} \boldsymbol{r}^{T} \boldsymbol{\mathcal{P}} ,
\end{equation}
where the encoder functions $g_{\ast}$ are constructed as $g_{\ast}=g_{\ast}(|\boldsymbol{u}|,|\boldsymbol{v}|,|\boldsymbol{v}'|)$ for $\ast = 1, 2$. Given $\boldsymbol{v}$ and $\boldsymbol{v}'$, $g_s^2$ and $g_r^2$ in Eq. \eqref{eq:CM2} can be determined by $g_s^2 = g_1^2 \vert \mathcal{P}\boldsymbol{r}\vert^2$ and $g_r^2 = g_2^2 \vert \mathcal{P}\boldsymbol{r}\vert^2$.
For each encoder function $g_{\ast}(\cdot)$, we seek a low-rank tensor product representation with a set of \tcb{scalar-valued univariate basis functions (i.e., $\mathbb{R}\rightarrow \mathbb{R}$}) to separate the individual and relative velocity dependencies, i.e., 
\begin{equation}\label{eq:sepfun}
    \begin{aligned}
        g_{\ast}(|\boldsymbol{u}|,|\boldsymbol{v}|,|\boldsymbol{v}'|) &= \sum_{j'}^{J'} \mathcal{L}_{\ast}^{j'}(|\boldsymbol{u}|) \mathcal{M}_{\ast}^{j'}(|\boldsymbol{v}|) \mathcal{N}_{\ast}^{j'}(|\boldsymbol{v}'|) + \mathcal{L}_{\ast}^{j'}(|\boldsymbol{u}|) \mathcal{N}_{\ast}^{j'}(|\boldsymbol{v}|) \mathcal{M}_{\ast}^{j'}(|\boldsymbol{v}'|), \\
        &= \sum_{j}^{J} L_{\ast}^{j}(|\boldsymbol{u}|) M_{\ast}^{j}(|\boldsymbol{v}|) N_{\ast}^{j}(|\boldsymbol{v}'|),
    \end{aligned}
\end{equation}
where $\{\mathcal{L}_{\ast}^{j'}, \mathcal{M}_{\ast}^{j'}, \mathcal{N}_{\ast}^{j'}\}_{j'=1}^{J'}$ are univariate basis functions, which are essentially encoders represented by neural networks and will be learned from the MD data as shown in Sec. \ref{subsec:NNConstruct}. $J'$ is the total number of modes used for the low-rank representation. The right-hand-side of Eq. \eqref{eq:sepfun} \tcb{preserves the permutation symmetry for $\boldsymbol{v}$ and $\boldsymbol{v}'$, i.e., $\bm \omega(\bm{v},\bm{v}') = \bm \omega(\bm{v}',\bm{v})$}, due to Eq. \eqref{eq:kernel_conditions}. Specifically, we have  
$L_{\ast}^{2j'-1}=L_{\ast}^{2j'}=\mathcal{L}_{\ast}^{j'}$, $M_{\ast}^{2j'-1}=N_{\ast}^{2j'}=\mathcal{M}_{\ast}^{j'}$ and $M_{\ast}^{2j'}=N_{\ast}^{2j'-1}=\mathcal{N}_{\ast}^{j'}$, for $j'=1,\cdots,J'$, and $J=2J'$.

By the new kernel representation \eqref{eq:CM2_sep}, the collision operator \eqref{eq:collision_symplectic} can be expressed as the summation of three parts, i.e., 
\begin{equation}\label{eq:sim_CM2}
    \begin{aligned}
        \dfrac{\partial f}{\partial t} =& \nabla \cdot \int g_{1}^{2} |\boldsymbol{\mathcal{P}}\boldsymbol{r}|^{2} \boldsymbol{\mathcal{P}} f(\boldsymbol{v}) f(\boldsymbol{v}') \left[ \nabla \log f(\boldsymbol{v}) - \nabla'\log f(\boldsymbol{v}') \right] \mathrm{d}\boldsymbol{v}' \\
        &+ \nabla \cdot \int g_{2}^{2} \boldsymbol{\mathcal{P}} \boldsymbol{r} \boldsymbol{r}^{T} \boldsymbol{\mathcal{P}} f(\boldsymbol{v}) f(\boldsymbol{v}') \left[ \nabla \log f(\boldsymbol{v}) - \nabla'\log f(\boldsymbol{v}') \right] \mathrm{d}\boldsymbol{v}' \\
        &- \nabla \cdot \int g_{1}^{2} \boldsymbol{\mathcal{P}} \boldsymbol{r} \boldsymbol{r}^{T} \boldsymbol{\mathcal{P}} f(\boldsymbol{v}) f(\boldsymbol{v}') \left[ \nabla \log f(\boldsymbol{v}) - \nabla'\log f(\boldsymbol{v}') \right] \mathrm{d}\boldsymbol{v}' \\
        =&: I_{1} + I_{2} - I_{3},
    \end{aligned}
\end{equation}
where $|\boldsymbol{\mathcal{P}}\boldsymbol{r}|^{2}$ and $\boldsymbol{\mathcal{P}} \boldsymbol{r} \boldsymbol{r}^{T} \boldsymbol{\mathcal{P}}$ can be written as expressions of $\boldsymbol{u}$, $\boldsymbol{v}$, and $\boldsymbol{v}'$,
\begin{equation}
    \begin{aligned}
        |\boldsymbol{\mathcal{P}}\boldsymbol{r}|^{2} &= 2|\boldsymbol{v}|^{2} + 2|\boldsymbol{v}|'^{2} - |\boldsymbol{u}|^{2} - \frac{1}{|\boldsymbol{u}|^{2}} (|\boldsymbol{v}|^{4} - 2|\boldsymbol{v}|^{2} |\boldsymbol{v'}|^{2} + |\boldsymbol{v'}|^{4}) , \\
        \boldsymbol{\mathcal{P}} \boldsymbol{r} \boldsymbol{r}^{T} \boldsymbol{\mathcal{P}} &= \dfrac{1}{|\boldsymbol{u}|^4} \left[ |\boldsymbol{u}|^{2}\boldsymbol{I}-\boldsymbol{u}\boldsymbol{u}^{T} \right] \left[ 2\boldsymbol{v}-\boldsymbol{u} \right] \left[ \boldsymbol{u}+2\boldsymbol{v}' \right]^{T} \left[ |\boldsymbol{u}|^{2}\boldsymbol{I}-\boldsymbol{u}\boldsymbol{u}^{T} \right] \\
        &= \dfrac{1}{|\boldsymbol{u}|^4}
        \left[ 2\boldsymbol{v}|\boldsymbol{u}|^{4}\boldsymbol{u}^{T} + 4\boldsymbol{v}|\boldsymbol{u}|^{4}\boldsymbol{v}'^{T} - |\boldsymbol{u}|^{4}\boldsymbol{u}\boldsymbol{u}^{T} - 2|\boldsymbol{u}|^{4}\boldsymbol{u}\boldsymbol{v}'^{T} \right. \\
        &\qquad\quad -2|\boldsymbol{v}|^{2}\boldsymbol{v}|\boldsymbol{u}|^{2}\boldsymbol{u}^{T} - 2|\boldsymbol{v}|^{2}|\boldsymbol{u}|^{2}\boldsymbol{u}\boldsymbol{v}'^{T} + 2\boldsymbol{v}|\boldsymbol{u}|^{2}\boldsymbol{u}^{T}|\boldsymbol{v}'|^{2} + 2|\boldsymbol{u}|^{2}\boldsymbol{u}|\boldsymbol{v}'|^{2}\boldsymbol{v}'^{T} \\
        &\left. \qquad\quad +|\boldsymbol{v}|^{4}\boldsymbol{u}\boldsymbol{u}^{T} - 2|\boldsymbol{v}|^{2}\boldsymbol{u}\boldsymbol{u}^{T}|\boldsymbol{v}'|^{2} + \boldsymbol{u}\boldsymbol{u}^{T}|\boldsymbol{v}'|^{4} \right].
    \end{aligned}
\end{equation}

Before proceeding to the detailed form, let us consider an integral term in the form of $\int h_1\left (\vert \boldsymbol{v}\vert\right)h_2\left (\vert \boldsymbol{u}\vert\right)h_3\left (\vert \boldsymbol{v}'\vert\right) \boldsymbol{h}_4\left (\boldsymbol{v}\right) \boldsymbol{h}_5\left (\boldsymbol{u}\right) \boldsymbol{h}_6\left (\boldsymbol{v}'\right) \mathrm{d}\boldsymbol{v}'$, where $h_1$, $h_2$ and $h_3$ are scalar functions and $\boldsymbol{h}_4$, $\boldsymbol{h}_5$ and $\boldsymbol{h}_6$ are vector-valued or matrix-valued functions. It is easy to see that the integral term preserves a convolution structure, i.e.,
\begin{equation*}
\int h_1\left (\vert \boldsymbol{v}\vert\right)h_2\left (\vert \boldsymbol{u}\vert\right)h_3\left (\vert \boldsymbol{v}'\vert\right) \boldsymbol{h}_4\left (\boldsymbol{v}\right) \boldsymbol{h}_5\left (\boldsymbol{u}\right) \boldsymbol{h}_6\left (\boldsymbol{v}'\right) \mathrm{d}\boldsymbol{v}' =   h_1\left (\vert \boldsymbol{v}\vert\right) \boldsymbol{h}_4\left (\boldsymbol{v}\right) \int h_2\left (\vert \boldsymbol{u}\vert\right)\boldsymbol{h}_5\left (\boldsymbol{u}\right)  h_3\left (\vert \boldsymbol{v}'\vert\right) \boldsymbol{h}_6\left (\boldsymbol{v}'\right) \mathrm{d}\boldsymbol{v}'.    
\end{equation*}

By plugging the tensor representation \eqref{eq:sepfun} into Eq. \eqref{eq:sim_CM2}, $g_{\ast}^2$ induces terms as $h_1\left (\vert \boldsymbol{u}\vert\right)h_2\left (\vert \boldsymbol{v}\vert\right)h_3\left (\vert \boldsymbol{v}'\vert\right)$. Accordingly, the integral term $I_1$ in Eq. \eqref{eq:sim_CM2} can be written as 
\begin{equation}\label{eq:I1}
    \begin{aligned}
        I_{1} &= \nabla \cdot \int g_{1}^{2} |\boldsymbol{\mathcal{P}}\boldsymbol{r}|^{2} \boldsymbol{\mathcal{P}} f(\boldsymbol{v}) f(\boldsymbol{v}') \left[ \nabla \log f(\boldsymbol{v}) - \nabla'\log f(\boldsymbol{v}') \right] \mathrm{d}\boldsymbol{v}' \\
        &= \sum_{j,k} \nabla \cdot 
        \left[ \{2|\boldsymbol{v}|^2,\boldsymbol{\mathcal{P}},1\}_{1}^{j,k} 
        +\{2,\boldsymbol{\mathcal{P}},|\boldsymbol{v}'|^{2}\}_{1}^{j,k} 
        +\{-1,|\boldsymbol{u}|^{2}\boldsymbol{\mathcal{P}},1\}_{1}^{j,k} \right. \\
        &\qquad\qquad \left. 
        +\{-|\boldsymbol{v}|^{4},|\boldsymbol{u}|^{-2}\boldsymbol{\mathcal{P}},1\}_{1}^{j,k} 
        +\{ 2|\boldsymbol{v}|^{2},|\boldsymbol{u}|^{-2}\boldsymbol{\mathcal{P}},|\boldsymbol{v}'|^{2}\}_{1}^{j,k} 
        +\{-1,|\boldsymbol{u}|^{-2}\boldsymbol{\mathcal{P}},|\boldsymbol{v}'|^{4}\}_{1}^{j,k} \right].
    \end{aligned}
\end{equation}
where individual terms $\{ \boldsymbol{\alpha}(\boldsymbol{v}),\boldsymbol{\beta}(\boldsymbol{u}),\boldsymbol{\gamma}(\boldsymbol{v}')\}_{\ast}^{j,k}$ take the form
\begin{equation}
    \begin{aligned}
        \{ \boldsymbol{\alpha}(\boldsymbol{v}),\boldsymbol{\beta}(\boldsymbol{u}),\boldsymbol{\gamma}(\boldsymbol{v}')\}_{\ast}^{j,k} =& M_{\ast}^{j}(|\boldsymbol{v}|) M_{\ast}^{k}(|\boldsymbol{v}|) \boldsymbol{\alpha}(\boldsymbol{v}) \cdot \\
        & \left[ \langle\boldsymbol{\beta}(\boldsymbol{u}), \boldsymbol{\gamma}(\boldsymbol{v}')f(\boldsymbol{v}') \rangle_{\ast}^{j,k} f(\boldsymbol{v}) \nabla \log f(\boldsymbol{v}) - \langle \boldsymbol{\beta}(\boldsymbol{u}), \boldsymbol{\gamma}(\boldsymbol{v}') f(\boldsymbol{v}') \nabla' \log f(\boldsymbol{v}') \rangle_{\ast}^{j,k} f(\boldsymbol{v}) \right],
    \end{aligned}
\nonumber
\end{equation}
and $\langle \boldsymbol{\xi}(\boldsymbol{u}),\boldsymbol{\eta}(\boldsymbol{v}') \rangle_{\ast}^{j,k} $ represents the convolution structure, i.e.,  
\begin{equation*}
\langle \boldsymbol{\xi}(\boldsymbol{u}),\boldsymbol{\eta}(\boldsymbol{v}') \rangle_{\ast}^{j,k} = \int  L_{\ast}^{j}(|\boldsymbol{u}|) L_{\ast}^{k}(|\boldsymbol{u}|) \boldsymbol{\xi}(\boldsymbol{u}) \cdot N_{\ast}^{j}(|\boldsymbol{v}'|) N_{\ast}^{k}(|\boldsymbol{v}'|) \boldsymbol{\eta}(\boldsymbol{v}') \mathrm{d}\boldsymbol{v}',    
\end{equation*} 
which can be efficiently computed using the FFT method. The integral term $I_2$ in Eq. \eqref{eq:sim_CM2} can be expressed as
\begin{equation}\label{eq:I2}
    \begin{aligned}
        I_{2} &= \nabla \cdot \int g_{2}^{2} \boldsymbol{\mathcal{P}} \boldsymbol{r} \boldsymbol{r}^{T} \boldsymbol{\mathcal{P}} f(\boldsymbol{v}) f(\boldsymbol{v}') \left[ \nabla \log f(\boldsymbol{v}) - \nabla'\log f(\boldsymbol{v}') \right] \mathrm{d}\boldsymbol{v}' \\
        &= \sum_{j,k} \nabla \cdot 
        \left[ \{2\boldsymbol{v},\boldsymbol{u}^{T},1\}_{2}^{j,k} 
        + \{4\boldsymbol{v},1,\boldsymbol{v}'^{T}\}_{2}^{j,k} 
        + \{-1,\boldsymbol{u}\boldsymbol{u}^{T},1\}_{2}^{j,k} \right. \\
        &\qquad\qquad \left. + \{-2,\boldsymbol{u},\boldsymbol{v}'^{T}\}_{2}^{j,k} 
        + \{-2|\boldsymbol{v}|^{2}\boldsymbol{v},|\boldsymbol{u}|^{-2}\boldsymbol{u}^{T},1\}_{2}^{j,k} 
        + \{-2|\boldsymbol{v}|^{2},|\boldsymbol{u}|^{-2}\boldsymbol{u},\boldsymbol{v}'^{T}\}_{2}^{j,k} \right. \\
        &\qquad\qquad \left. 
        + \{2\boldsymbol{v},|\boldsymbol{u}|^{-2}\boldsymbol{u}^{T},|\boldsymbol{v}'|^{2}\}_{2}^{j,k} 
        + \{2,|\boldsymbol{u}|^{-2}\boldsymbol{u},|\boldsymbol{v}'|^{2}\boldsymbol{v}'^{T}\}_{2}^{j,k} 
        + \{|\boldsymbol{v}|^{4},|\boldsymbol{u}|^{-4}\boldsymbol{u}\boldsymbol{u}^{T},1\}_{2}^{j,k} \right. \\
        &\qquad\qquad \left. + \{-2|\boldsymbol{v}|^{2},|\boldsymbol{u}|^{-4}\boldsymbol{u}\boldsymbol{u}^{T},|\boldsymbol{v}'|^{2}\}_{2}^{j,k} 
        + \{1,|\boldsymbol{u}|^{-4}\boldsymbol{u}\boldsymbol{u}^{T},|\boldsymbol{v}'|^{4}\}_{2}^{j,k} \right] , 
    \end{aligned}
\end{equation}
which can also be efficiently computed by exploiting the convolution structure. The integral term $I_{3}$ can be represented similar to $I_{2}$ with $\ast=1$. Therefore, the tensor representation \eqref{eq:sepfun} of the new anisotropic non-stationary kernel \eqref{eq:CM2_sep} enables us to achieve efficient evaluation of the collision operator \eqref{eq:collision_symplectic}.  

\begin{remark}
If we use the original form of Eq. \eqref{eq:collision_symplectic}, 
the above fast approximation can still be used with the term $f(\boldsymbol{v}) \nabla \log f(\boldsymbol{v})$  replaced by $\nabla f(\boldsymbol{v})$ in Eq. \eqref{eq:sim_CM2}. We choose the log-form $f(\boldsymbol{v}) \nabla \log f(\boldsymbol{v})$ to construct the structure-preserving numerical scheme presented in Sec. \ref{subsec:discretization}.    
\end{remark}

\subsection{Data-driven construction from micro-scale MD}
\label{subsec:NNConstruct}

To construct the new collision kernel \eqref{eq:CM2_sep} \eqref{eq:sepfun}, we establish a data-driven learning of the univariate basis functions $\{L_{\ast}^{j}, M_{\ast}^{j}, N_{\ast}^{j}\}_{j=1}^{J}$ by matching the prediction of the kinetic processes between the kinetic equation \eqref{eq:collision_symplectic} and the micro-scale full MD data.
\tcr{Recent work \cite{chu2024inference} presents a data-driven inference of interaction kernels in opinion dynamics.}
Let $\left \{\boldsymbol{v}_{m}^{n}\right\}_{m=1}^{N_{MD}}$ denote a collection of the samples of the MD particle velocities at $t=t_n$. However, direct evaluation of Eq. \eqref{eq:collision_symplectic} involves terms such as $\nabla f(\boldsymbol{v})$ and $\nabla' f(\boldsymbol{v}')$ and therefore
relies on the numerical estimation of the three-dimensional probability density function of the MD velocity $f(\boldsymbol{v})$ from the empirical distribution $f^{\ast}(\boldsymbol{v}) = \frac{1}{N_{MD}}\sum_{m=1}^{N_{MD}} \delta(\boldsymbol{v}-\boldsymbol{v}_{m})$, which can be computationally expensive.
To alleviate this issue, we propose a weak form of the loss function by introducing a set of test basis functions $\left \{\psi_k(\boldsymbol{v})\right\}_{k=1}^K$, i.e.,
\begin{equation}\label{eq:loss}
    \mathcal{L}= \sum_{k=1}^K \sum_{n=1}^{N_T} \left(\left.\dfrac{\partial f^n}{\partial t}\right|_{MD}-\left.\dfrac{\partial f^n}{\partial t}\right|_{c},\psi_k(\boldsymbol{v})\right)^2,
\end{equation}
where $(h_1, h_2) := \int h_1(\boldsymbol{v}) h_2(\boldsymbol{v}) {\rm d} \boldsymbol{v}$ represents the inner product in the velocity space.  $K$ and $N_T$ represent the number of test basis functions and time steps, respectively. We refer to \ref{sec:MD} for the details on the MD and training setup. For the MD terms of the loss function, we have
\begin{equation}
    \left(\left.\dfrac{\partial f^n}{\partial t}\right|_{MD}, \psi_k(\boldsymbol{v})\right) \approx\left(\dfrac{f^{n+1}-f^{n}}{\Delta t},\psi_k(\boldsymbol{v})\right)
    =\dfrac{1}{N_{MD}\Delta t}\sum_{m=1}^{N_{MD}} (\psi_k(\boldsymbol{v}_{m}^{n+1})-\psi_k(\boldsymbol{v}_{m}^{n})),
\end{equation}
which can be efficiently evaluated by the Monte Carlo method with the empirical distribution $f^{\ast}(\boldsymbol{v})$.
\tcb{For the collision part, we use the weak form with gradient and tensor product}
\tcr{
\begin{equation}
    \begin{split}
        \left(\left.\dfrac{\partial f}{\partial t}\right|_{c},\psi_k(\boldsymbol{v})\right) 
        =& \left(\nabla_{\boldsymbol{v}} \cdot \left( \nabla_{\boldsymbol{v}} f(\boldsymbol{v})\cdot \int \boldsymbol{\omega} f(\boldsymbol{v}') \mathrm{d}\boldsymbol{v}' \right)
        - \nabla_{\boldsymbol{v}} \cdot \left( f(\boldsymbol{v}) \int \boldsymbol{\omega} \cdot \nabla_{\boldsymbol{v}'} f(\boldsymbol{v}') \mathrm{d}\boldsymbol{v}' \right), \psi_k(\boldsymbol{v}) \right) \\
        =&	\left(f(\boldsymbol{v})\int \boldsymbol{\omega} f(\boldsymbol{v}')\mathrm{d}\boldsymbol{v}', \nabla_{\boldsymbol{v}} \nabla_{\boldsymbol{v}} \psi_k(\boldsymbol{v})\right) 
        + \left( f(\boldsymbol{v}) \int (\nabla_{\boldsymbol{v}} \cdot \boldsymbol{\omega}) f(\boldsymbol{v}') \mathrm{d}\boldsymbol{v}', \nabla_{\boldsymbol{v}} \psi_k (\boldsymbol{v}) \right) \\
        &- \left( f(\boldsymbol{v}) \int (\nabla_{\boldsymbol{v}'} \cdot \boldsymbol{\omega}) f(\boldsymbol{v}') \mathrm{d}\boldsymbol{v}', \nabla_{\boldsymbol{v}} \psi_k (\boldsymbol{v}) \right), \\    
    \end{split}
\end{equation}
}
where the individual terms can also be evaluated by the Monte Carlo method with empirical distribution $f^{\ast}(\boldsymbol{v})$, i.e., 
\tcr{
\begin{equation}
    \begin{aligned}
        \left( f(\boldsymbol{v})\int \boldsymbol{\omega} f(\boldsymbol{v}')\mathrm{d}\boldsymbol{v}', \nabla_{\boldsymbol{v}} \nabla_{\boldsymbol{v}} \psi_k(\boldsymbol{v}) \right) =&
        \dfrac{1}{N_{MD}^2}\sum_{m,m'}^{N_{MD}} \boldsymbol{\omega}(\boldsymbol{v}_{m},\boldsymbol{v}_{m'} ') : \nabla_{\boldsymbol{v}} \nabla_{\boldsymbol{v}} \psi_k(\boldsymbol{v}_{m}), \\
        \left( f(\boldsymbol{v}) \int (\nabla_{\boldsymbol{v}} \cdot \boldsymbol{\omega}) f(\boldsymbol{v}') \mathrm{d}\boldsymbol{v}', \nabla_{\boldsymbol{v}} \psi_k (\boldsymbol{v}) \right) =&
        \dfrac{1}{N_{MD}^2}\sum_{m,m'}^{N_{MD}} (\nabla_{\boldsymbol{v}} \cdot \boldsymbol{\omega})(\boldsymbol{v}_{m},\boldsymbol{v}_{m'} ') \cdot \nabla_{\boldsymbol{v}} \psi_k (\boldsymbol{v}_{m}), \\
        \left( f(\boldsymbol{v}) \int (\nabla_{\boldsymbol{v}'} \cdot \boldsymbol{\omega}) f(\boldsymbol{v}') \mathrm{d}\boldsymbol{v}', \nabla_{\boldsymbol{v}} \psi_k (\boldsymbol{v}) \right) =&
        \dfrac{1}{N_{MD}^2}\sum_{m,m'}^{N_{MD}} (\nabla_{\boldsymbol{v}'} \cdot \boldsymbol{\omega})(\boldsymbol{v}_{m},\boldsymbol{v}_{m'} ') \cdot \nabla_{\boldsymbol{v}} \psi_k (\boldsymbol{v}_{m}) .
    \end{aligned}
\end{equation}
}

In practice, the number of MD particles $N_{MD}$ can be large (e.g. $N_{MD}\sim O(10^6)$). The double summation could be computationally expensive. Alternatively, we use the mini-batch method \cite{ketkar2017stochastic, li2014efficient, jin2020random, jin2021random, carrillo2021random} to accelerate the numerical evaluation, i.e., 
\tcr{
\begin{equation}\label{eq:minibatch}
    \begin{aligned}
        \left( f(\boldsymbol{v})\int \boldsymbol{\omega} f(\boldsymbol{v}')\mathrm{d}\boldsymbol{v}', \nabla_{\boldsymbol{v}} \nabla_{\boldsymbol{v}} \psi_k(\boldsymbol{v}) \right) &=
        \dfrac{1}{N_{MD}^2}\sum_{m,m'}^{N_{MD}} \boldsymbol{\omega}(\boldsymbol{v}_{m},\boldsymbol{v}_{m'} ') : \nabla_{\boldsymbol{v}} \nabla_{\boldsymbol{v}} \psi_k(\boldsymbol{v}_{m}) \\
        &\approx \dfrac{1}{P}\sum_{p=1}^{P} \boldsymbol{\omega}(\boldsymbol{v}_{m(p)},\boldsymbol{v}_{m'(p)} ') : \nabla_{\boldsymbol{v}} \nabla_{\boldsymbol{v}} \psi_k(\boldsymbol{v}_{m(p)}),
    \end{aligned}
\end{equation}
}
with pairs $(\boldsymbol{v}_{m(p)}, \boldsymbol{v}_{m'(p)} ')$ randomly selected from the MD samples for $p=1,\cdots,P$.

\subsection{Structure-preserving numerical discretization} \label{subsec:discretization}
With the new collision kernel $\boldsymbol\omega(\boldsymbol{v}, \boldsymbol{v}')$ \eqref{eq:CM2_sep} \eqref{eq:sepfun} constructed by minimizing the empirical loss function \eqref{eq:loss}, we obtain a generalized collision operator that retains the MD fidelity beyond empirical models, and meanwhile, strictly preserves the physical constraints. 
This enables us to further develop structure-preserving numerical schemes that inherit the essential physical properties on the discrete level.  To pursue the numerical solution, we consider the collisional kinetic equation in a bounded domain $\Omega$, i.e., 
\begin{equation}\label{eq:collision_symplectic_bounded}
    \begin{split}
        &\dfrac{\partial f}{\partial t}(\boldsymbol{v},t) = \nabla \cdot \int \boldsymbol{\omega}(\boldsymbol{v}, \boldsymbol{v}') \left[ f(\boldsymbol{v}') \nabla f(\boldsymbol{v}) - f(\boldsymbol{v}) \nabla' f(\boldsymbol{v}') \right] \mathrm{d}\boldsymbol{v}',  \quad \boldsymbol{v} \in \Omega \\
        &\left(  \int \boldsymbol{\omega}(\boldsymbol{v}, \boldsymbol{v}') \left[ f(\boldsymbol{v}') \nabla f(\boldsymbol{v}) - f(\boldsymbol{v}) \nabla' f(\boldsymbol{v}') \right] \mathrm{d}\boldsymbol{v}'\right) \cdot \hat{\boldsymbol{n}}(\boldsymbol{v}) = 0, ~~\quad \forall \boldsymbol{v} \in \partial\Omega.
    \end{split}
\end{equation}

It is straightforward to show that the conservation laws and H-theorem similar to Proposition \ref{prop:kernel_condition} hold for model \eqref{eq:collision_symplectic_bounded} in the bounded domain $\Omega$. 

In this work, we focus on the fast approximation of the generalized collision operator and follow the essential idea of Refs. \cite{degond1994entropy, buet1999numerical} to construct the semi-discrete structure-preserving scheme based on the finite-difference method.
\tcb{Other approaches such as the spectral method can also be straightforwardly integrated into the present spectral splitting strategy for numerical simulations of Eq. \eqref{eq:I1}.}
For simplicity, we consider a cubic domain and the distribution function is approximated by a piecewise constant function on an equal-space grid mesh at $\boldsymbol{v}_{\boldsymbol{i}}=(i_{1}h, i_{2}h, i_{3}h)$ with mesh size $h$ and $\boldsymbol{i}=(i_{1}, i_{2}, i_{3}) \in \mathbb{Z}^{3}$.
Denote $f_{\boldsymbol{i}}(t)$ as the approximated discrete probability at velocity $\boldsymbol{v}_{\boldsymbol{i}}$ and time $t$.
The evolution of the PDF is formulated as
\begin{equation}
    \begin{aligned}
        \dfrac{\partial f_{\boldsymbol{i}}(t)}{\partial t} =& \mathrm{D}^{+} \boldsymbol{p}_{\boldsymbol{i}} , \\ 
        \boldsymbol{p}_{\boldsymbol{i}} =& h^{3} \sum_{\boldsymbol{j} \in \mathbb{Z}^{3}} \boldsymbol{\omega}(\boldsymbol{v}_{\boldsymbol{i}}, \boldsymbol{v}_{\boldsymbol{j}}) f_{\boldsymbol{i}} f_{\boldsymbol{j}} \left[ \mathrm{D}^{-} \log f_{\boldsymbol{i}} - \mathrm{D}^{-} \log f_{\boldsymbol{j}} \right] ,
    \end{aligned}
\end{equation}
where $\boldsymbol{p}_{\boldsymbol{i}}$ is the probability flux at $\boldsymbol{v}_{\boldsymbol{i}}$, and $\mathrm{D}^{-}$ and $\mathrm{D}^{+}$ are dual discrete gradient and divergence operators of central difference discretization defined by
\begin{equation}
    \begin{aligned}
        (\mathrm{D}^{-} \psi_{\boldsymbol{i}})_{k} &= [\psi(\boldsymbol{v}_{\boldsymbol{i}}+\boldsymbol{e}_{k}h) - \psi(\boldsymbol{v}_{\boldsymbol{i}}-\boldsymbol{e}_{k}h)]/2h , \\
        \mathrm{D}^{+} \boldsymbol{\xi}_{\boldsymbol{i}} &= \sum_{k=1}^{3} [\xi_{k}(\boldsymbol{v}_{\boldsymbol{i}}+\boldsymbol{e}_{k}h) - \xi_{k}(\boldsymbol{v}_{\boldsymbol{i}}-\boldsymbol{e}_{k}h)]/2h ,
    \end{aligned}
\end{equation}
where $k=1,2,3$ and $\boldsymbol{e}_{k}$ is the unit vector in the $k$-th dimension.
In all numerical examples in this paper, we use the above central difference scheme for simulation.

\begin{proposition}
    The numerical semi-discretization scheme defined above satisfies the conservation law of mass, momentum, and kinetic energy, and the H-theorem in the discrete sense.
\end{proposition}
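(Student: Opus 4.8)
The plan is to transcribe the two computations in the proof of Proposition~\ref{prop:kernel_condition} — the weak identity \eqref{eq:conservation} for conservation and the entropy identity \eqref{eq:Hthm} for the H-theorem — to the velocity grid, with \emph{summation by parts} playing the role of integration by parts. The one structural fact needed is the discrete adjointness of the central-difference operators,
\begin{equation*}
    \sum_{\bm{i}} \phi_{\bm{i}}\,\bigl(\mathrm{D}^{+}\bm{\xi}_{\bm{i}}\bigr) = -\sum_{\bm{i}} \bigl(\mathrm{D}^{-}\phi_{\bm{i}}\bigr)\cdot\bm{\xi}_{\bm{i}},
\end{equation*}
obtained by shifting the index $\bm{i}\mapsto\bm{i}\mp\bm{e}_k$ in each coordinate $k$; the boundary contributions cancel because the discrete no-flux convention (the analogue of the second line of \eqref{eq:collision_symplectic_bounded}) sets the ghost values of $\bm{p}$ across $\partial\Omega$ to zero. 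Taking $\bm{\xi}_{\bm{i}}=\bm{p}_{\bm{i}}$, inserting the definition of $\bm{p}_{\bm{i}}$, and symmetrizing the resulting double sum over $(\bm{i},\bm{j})$ — which is legitimate since $\bm{\omega}(\bm{v}_{\bm{i}},\bm{v}_{\bm{j}})=\bm{\omega}(\bm{v}_{\bm{j}},\bm{v}_{\bm{i}})$ — yields, for every grid function $\phi_{\bm{i}}$,
\begin{equation*}
    h^{3}\sum_{\bm{i}}\phi_{\bm{i}}\,\frac{\partial f_{\bm{i}}}{\partial t}
    = -\frac{h^{6}}{2}\sum_{\bm{i},\bm{j}}\bigl(\mathrm{D}^{-}\phi_{\bm{i}}-\mathrm{D}^{-}\phi_{\bm{j}}\bigr)\cdot\bm{\omega}(\bm{v}_{\bm{i}},\bm{v}_{\bm{j}})\,f_{\bm{i}}f_{\bm{j}}\bigl[\mathrm{D}^{-}\log f_{\bm{i}}-\mathrm{D}^{-}\log f_{\bm{j}}\bigr],
\end{equation*}
the exact discrete analogue of \eqref{eq:conservation}.

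For conservation I would evaluate $\mathrm{D}^{-}$ on $\phi\in\{1,\,v_{\ell},\,\tfrac12|\bm{v}|^{2}\}$, using that the central difference is exact on affine and quadratic polynomials on the equispaced grid: $\mathrm{D}^{-}1=\bm{0}$, $(\mathrm{D}^{-}v_{\ell})_{k}=\delta_{k\ell}$, and $\mathrm{D}^{-}\bigl(\tfrac12|\bm{v}_{\bm{i}}|^{2}\bigr)=\bm{v}_{\bm{i}}$. For mass and for each momentum component the difference $\mathrm{D}^{-}\phi_{\bm{i}}-\mathrm{D}^{-}\phi_{\bm{j}}$ is identically zero; for kinetic energy it equals $\bm{v}_{\bm{i}}-\bm{v}_{\bm{j}}$, which is annihilated on the left by $\bm{\omega}(\bm{v}_{\bm{i}},\bm{v}_{\bm{j}})$ thanks to the orthogonality relation $\bm{\omega}(\bm{v},\bm{v}')(\bm{v}-\bm{v}')=\bm{0}$ together with the symmetry of $\bm{\omega}$. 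Hence the right-hand side vanishes in all three cases, so the discrete mass $h^{3}\sum_{\bm{i}}f_{\bm{i}}$, momentum $h^{3}\sum_{\bm{i}}\bm{v}_{\bm{i}}f_{\bm{i}}$, and kinetic energy $\tfrac{h^{3}}{2}\sum_{\bm{i}}|\bm{v}_{\bm{i}}|^{2}f_{\bm{i}}$ are time-invariant.

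For the H-theorem, set $S_h=-h^{3}\sum_{\bm{i}}f_{\bm{i}}\log f_{\bm{i}}$ so that $\tfrac{\ud}{\ud t}S_h=-h^{3}\sum_{\bm{i}}(\log f_{\bm{i}}+1)\,\partial_{t}f_{\bm{i}}$; the ``$+1$'' piece equals $-h^{3}\sum_{\bm{i}}\mathrm{D}^{+}\bm{p}_{\bm{i}}=0$ by the mass computation, and the remaining piece is the symmetrized identity above with $\phi_{\bm{i}}=\log f_{\bm{i}}$, i.e.
\begin{equation*}
    \frac{\ud S_h}{\ud t}=\frac{h^{6}}{2}\sum_{\bm{i},\bm{j}}\bm{B}_{\bm{ij}}^{T}\,\bm{\omega}(\bm{v}_{\bm{i}},\bm{v}_{\bm{j}})\,\bm{B}_{\bm{ij}}\,f_{\bm{i}}f_{\bm{j}}\ \ge\ 0,\qquad \bm{B}_{\bm{ij}}:=\mathrm{D}^{-}\log f_{\bm{i}}-\mathrm{D}^{-}\log f_{\bm{j}},
\end{equation*}
which is nonnegative term by term because $\bm{\omega}$ is symmetric positive semi-definite and $f_{\bm{i}},f_{\bm{j}}\ge 0$ — the discrete image of \eqref{eq:Hthm}. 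I would also remark that the log-form $f_{\bm{i}}\mathrm{D}^{-}\log f_{\bm{i}}$ of the flux is exactly what makes this argument well-posed and, as in Refs.~\cite{degond1994entropy,buet1999numerical}, keeps $f_{\bm{i}}>0$ along the semi-discrete flow, so that $\log f_{\bm{i}}$ is meaningful.

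The index shifts and the two elementary difference-quotient evaluations are routine; the only point requiring genuine care is the boundary bookkeeping — one must fix the discrete no-flux convention (extend $\bm{p}$ by zero across $\partial\Omega$, equivalently truncate the stencils at the boundary) consistently so that the summation-by-parts identity holds with no residual boundary terms. Once that convention is pinned down, the remaining argument is a line-by-line translation of the continuous proof of Proposition~\ref{prop:kernel_condition}.
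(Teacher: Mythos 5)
Your proposal is correct and follows essentially the same route as the paper's proof: summation by parts with the dual operators $\mathrm{D}^{+},\mathrm{D}^{-}$, symmetrization of the double sum, exactness of the central difference on $\{1,v_{\ell},|\bm{v}|^{2}\}$ combined with $\bm{v}_{\bm{i}}-\bm{v}_{\bm{j}}\in\mathrm{Ker}\,\bm{\omega}(\bm{v}_{\bm{i}},\bm{v}_{\bm{j}})$ for conservation, and the resulting positive semi-definite quadratic form for the discrete entropy production. The only additions beyond the paper's argument are your explicit treatment of the boundary/no-flux convention and of the ``$+1$'' term in $\partial_t(-f_{\bm{i}}\log f_{\bm{i}})$, which are harmless refinements of the same proof.
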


\begin{proof}
The definition of any discrete physical quantity on the grids and its evolution can be represented by
\begin{equation}
    \begin{aligned}
        \Phi(t) &= h^{3} \sum_{\boldsymbol{i}\in\mathbb{Z}^{3}} f_{\boldsymbol{i}} \phi_{\boldsymbol{i}} , \\
        \dfrac{\mathrm{d} \Phi(t)}{\mathrm{d} t} &=  h^{3} \sum_{\boldsymbol{i}\in\mathbb{Z}^{3}} \dfrac{\partial f_{\boldsymbol{i}}}{\partial t} \phi_{\boldsymbol{i}} \\
        &= h^{6} \sum_{\boldsymbol{i}\in\mathbb{Z}^{3}} \phi_{\boldsymbol{i}} \mathrm{D}^{+} \sum_{\boldsymbol{j}\in\mathbb{Z}^{3}} \boldsymbol{\omega}(\boldsymbol{v}_{\boldsymbol{i}}, \boldsymbol{v}_{\boldsymbol{j}}) f_{\boldsymbol{i}} f_{\boldsymbol{j}} \left[ \mathrm{D}^{-} \log f_{\boldsymbol{i}} - \mathrm{D}^{-} \log f_{\boldsymbol{j}} \right] \\
        &= -\dfrac{1}{2} h^{6} \sum_{\boldsymbol{i},\boldsymbol{j}\in\mathbb{Z}^{3}} \left[ \mathrm{D}^{-} \phi_{\boldsymbol{i}} - \mathrm{D}^{-} \phi_{\boldsymbol{j}} \right]^{T} \boldsymbol{\omega}(\boldsymbol{v}_{\boldsymbol{i}}, \boldsymbol{v}_{\boldsymbol{j}}) f_{\boldsymbol{i}} f_{\boldsymbol{j}} \left[ \mathrm{D}^{-} \log f_{\boldsymbol{i}} - \mathrm{D}^{-} \log f_{\boldsymbol{j}} \right] .
    \end{aligned}
\end{equation}
When we choose $\phi_{\boldsymbol{i}} = \{1, ~ v_{k},~ \boldsymbol{v}_{\boldsymbol{i}}^{2} \}$,
\begin{equation}
	\mathrm{D}^{-} \phi_{\boldsymbol{i}} - \mathrm{D}^{-} \phi_{\boldsymbol{j}} = \left\{
	\begin{aligned}
		&0 , & \phi_{\boldsymbol{i}} &= 1 \\
		&0 , & \phi_{\boldsymbol{i}} &= v_{k} \\
		&2(\boldsymbol{v}_{\boldsymbol{i}} - \boldsymbol{v}_{\boldsymbol{j}}) , & \phi_{\boldsymbol{i}} &= \boldsymbol{v}_{\boldsymbol{i}}^{2} .
	\end{aligned}
	\right.
\end{equation}
Moreover, $\boldsymbol{v}_{\boldsymbol{i}} - \boldsymbol{v}_{\boldsymbol{j}} \in \text{Ker}(\boldsymbol{\omega}(\boldsymbol{v}_{\boldsymbol{i}}, \boldsymbol{v}_{\boldsymbol{j}}))$, so that the conservation law holds.

The discrete entropy and its time derivative are defined as
\begin{equation}
    \begin{aligned}
        S(f) &= - h^{3} \sum_{\boldsymbol{i}\in\mathbb{Z}^{3}} f_{\boldsymbol{i}} \log f_{\boldsymbol{i}} , \\
        \dfrac{\mathrm{d} S(f)}{\mathrm{d} t} &= \dfrac{1}{2} h^{6} \sum_{\boldsymbol{i},\boldsymbol{j}\in\mathbb{Z}^{3}} \left[ \mathrm{D}^{-} \log f_{\boldsymbol{i}} - \mathrm{D}^{-} \log f_{\boldsymbol{j}} \right]^{T} \boldsymbol{\omega}(\boldsymbol{v}_{\boldsymbol{i}},\boldsymbol{v}_{\boldsymbol{j}}) f_{\boldsymbol{i}} f_{\boldsymbol{j}} \left[ \mathrm{D}^{-} \log f_{\boldsymbol{i}} - \mathrm{D}^{-} \log f_{\boldsymbol{j}} \right] \geq 0.
    \end{aligned}
\end{equation}
The equality holds if and only if $\mathrm{D}^{-} \log f_{\boldsymbol{i}} - \mathrm{D}^{-} \log f_{\boldsymbol{j}} \in \text{Ker}(\boldsymbol{\omega}(\boldsymbol{v}_{\boldsymbol{i}},\boldsymbol{v}_{\boldsymbol{j}}))$, i.e. the equilibrium state is the discrete Maxwellian distribution.
\end{proof}

\begin{remark}
The dual discrete gradient and divergence operators can be formed by spatial discretization schemes such as the forward-backward difference or finite volume method.    
\end{remark}


\begin{remark}
In this work, we use the explicit time discretization with time step size $dt \propto h^{2}$, and the full discrete scheme can only guarantee the preservation of the conservation laws. Although numerical results in Sec. \ref{sec:numerical} show the nonnegative entropy dissipation, implicit time schemes should be used for the theoretical guarantee of the H-theorem and numerical simulations with large time steps, which will be pursued in future work. Alternatively, the numerical scheme \cite{buet1999numerical} based on the geometric average of the distribution function might be considered.       
\end{remark}

\section{Numerical results}\label{sec:numerical}

In this study, we consider the one-component plasma system in the moderate coupling regime where the plasma kinetics can not be fully captured by the classical Landau equation; see \ref{sec:physical_regime} for detailed discussions.  
All numerical simulations are conducted in reduced units, with the physical parameters of the corresponding system provided in \ref{sec:MD}. 
To construct the generalized collision kernel $\boldsymbol{\omega}(\boldsymbol{v}, \boldsymbol{v}')$ in the spectral separation form, we utilize several MD simulation trajectories initiated from equilibrium particle configurations with a variety of initial velocity distributions. We refer to \ref{sec:training} for the detailed choice of the training data and the test bases $\{\psi_k(\boldsymbol{v})\}_{k=1}^K$.

\subsection{Generalized collision kernel}
Fig. \ref{fig:grgs} presents the encoder functions of the constructed generalized collision operator. For visualization purpose, we show the expectations of encoder functions with respect to the variable $s$, namely \tcb{$\mathbb{E}_{s}[g_{r}^2(u, r, s)]$ and $\mathbb{E}_{s}[g_{s}^2(u, r, s)]$} in Eq. \eqref{eq:CM2}, $\mathbb{E}_{s}[g_{1}^2(u, r, s)]$ and $\mathbb{E}_{s}[g_{2}^2(u, r, s)]$ in Eq. \eqref{eq:CM2_sep}, 
\tcb{where $\mathbb{E}_{s}$ is calculated by taking the expectation over $|\bm{s}|$ with the Monte Carlo sampling of $\boldsymbol{v}$ and $\boldsymbol{v}'$ independently from the Maxwellian distribution corresponding to the system temperature.}
In contrast to the Landau equation, the proposed generalized collision kernel $\boldsymbol{\omega} (\boldsymbol{v}, \boldsymbol{v}')$ is anisotropic and non-stationary. In particular, the energy transfer magnitude along the average velocity direction $\widetilde{\boldsymbol{r}} \propto \mathcal{P}(\boldsymbol{v} + \boldsymbol{v}' - 2\bar{\boldsymbol{v}})$  is smaller than that along the perpendicular direction $\widetilde{\boldsymbol{s}} \propto \widetilde{\boldsymbol{u}}\times \widetilde{\boldsymbol{r}}$, i.e., $g_{r}^{2} \leq g_{s}^{2}$ (or equivalently $g_2^2 \leq g_1^2)$. This heterogeneity is due to the collective interaction between the pair of colliding particles and the environment, which leads to an additional energy transfer on the plane with projection $\mathcal{P}_r = \boldsymbol{I} - \hat{\boldsymbol{r}}\hat{\boldsymbol{r}}^T$ and $\hat{\boldsymbol{r}} = \boldsymbol{r}/\vert \boldsymbol{r}\vert$. To preserve the conservation laws and H-theorem, the second energy transfer is restricted in the null space of $\boldsymbol{u}$, and therefore leads to a heterogeneous collision kernel, since $\widetilde{\boldsymbol{r}}^T \mathcal{P} \mathcal{P}_r \mathcal{P} \widetilde{\boldsymbol{r}} \le \widetilde{\boldsymbol{s}}^T \mathcal{P} \mathcal{P}_r \mathcal{P} \widetilde{\boldsymbol{s}}$ \tcr{(we refer to the Ref. \cite{zhao2025data} for further discussion)}. This complex many-body effect cannot be easily characterized by existing empirical collision models. In contrast, the present
enriched anisotropic and non-stationary structure \eqref{eq:CM2} and \eqref{eq:CM2_sep} enables the constructed kernel to naturally encode the unresolved particle interactions beyond the empirical forms.


\begin{figure}[H]
    \centering
    \includegraphics[width=0.7\textwidth]{./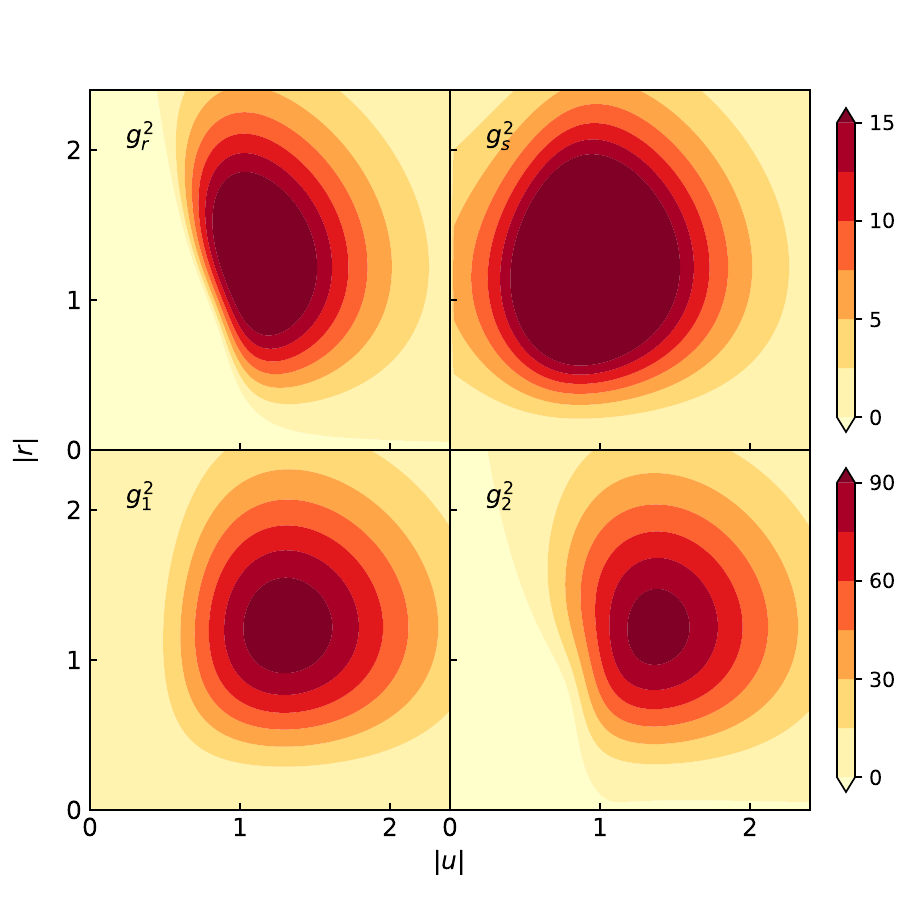}
    \caption{Expectations of encoder functions $\mathbb{E}_{s}[g_{r}^2(u, r, s)]$ and $\mathbb{E}_{s}[g_{r}^2(u, r, s)]$ in Eq. \eqref{eq:CM2}, and $\mathbb{E}_{s}[g_{1}^2(u, r, s)]$ and $\mathbb{E}_{s}[g_{2}^2(u, r, s)]$ in Eq. \eqref{eq:CM2_sep} over $s$. Unlike the standard Landau collision operator, the present model takes an anisotropic and non-stationary kernel. In particular, $g_r^2 \le g_s^2$ (or equivalently, $g_2^2 \le g_1^2$) due to the collective interactions between the colliding pair and the environment.}
    \label{fig:grgs}
\end{figure}

\subsection{Model verification}
To assess the accuracy of the proposed model, we perform comparative simulations of the plasma kinetics using three methods: MD simulations, the classical Landau equation, and the generalized collision operator with the spectral separation (SS) formulation.
In the numerical simulations of both the Landau equation and the proposed generalized collision operator, the computational domain in velocity space is defined as $[-L/2, L/2]^{3}$, with uniform mesh size $h$.
The number of grid points in each dimension is $N_{0} = L/h$, leading to a total of $N = N_{0}^{3}$ discrete velocity grid points.
A finite central difference scheme is applied to simulate the integro-differential equation given in Eq. \eqref{eq:collision_symplectic}.

Each simulation begins with one of two distinct initial velocity distributions. 
The first is a Gaussian mixture model (GMM), defined as follows:
\begin{equation}
    f(\boldsymbol{v},t=0) \sim \prod_{i=1}^{3} \left[ \alpha_{1} \exp\left(-\dfrac{|v_{i}-d_{i_{1}}|^{2}}{2\sigma_{1}^{2}}\right) + \alpha_{2} \exp\left(-\dfrac{|v_{i}-d_{i_{2}}|^{2}}{2\sigma_{2}^{2}}\right) \right],
    \label{eq:GMM_dist}
\end{equation}

The second initial velocity distribution is the radial model (RM), which combines a Gaussian distribution with a cosine modulation:
\begin{equation}
    f(\boldsymbol{v},t=0) \sim \alpha_{1} \exp(-\alpha_{2} \boldsymbol{v}^{2}/2) \cos(\alpha_{2} \boldsymbol{v}^{2})^{2}.
    \label{eq:RM_dist}
\end{equation}

The coefficients $\alpha_{\ast}$, $d_{\ast}$ and $\sigma_{\ast}$ are prescribed to ensure that the initial distributions satisfy three key conditions: unit total probability, zero net momentum, and a total kinetic energy consistent with the system parameters specified in the \ref{sec:simulation_settings}.
The initial velocity distribution in the GMM is constructed by extending a one-dimensional double-well potential to three dimensions via multiplicative separation along each velocity component.


Fig. \ref{fig:sim_GMM} presents the instantaneous velocity distribution function $f(\boldsymbol{v}, t)$ on the $v_x$-$v_y$ plane with the GMM initial distribution, obtained from the MD, Landau and the present model at $t=0.5$ and $t=1.0$, \tcb{and the comparison of the absolute error is shown in Fig. \ref{fig:GMM_abserr} in \ref{sec:simulation_settings}}.
The predictions of the present generalized collision kernel with SS structure can accurately reproduce the MD results, including the peaks of the distribution along the angular direction. 
In contrast, the prediction from the Landau model shows apparent deviations. 
Fig. \ref{fig:sim_radial} presents the simulation results of the RM case at times $t=0.25$, $0.5$, and $0.75$, comparing the velocity distribution functions $f(|\boldsymbol{v}|)$ and $|\boldsymbol{v}|f(|\boldsymbol{v}|)$ computed by using the MD, Landau, and the present model. Similar to the GMM case, the prediction of the present model shows good agreement with the MD results, while the prediction of the Landau model shows apparent deviations. In particular, the time evolution of the velocity distribution $f(\boldsymbol{v}, t)$ retains the radial symmetry condition discussed in Corollary \ref{cor:radial_symmetry}, and verifies the frame-indifference constraints.

\begin{figure}[H]
    \centering
    \includegraphics[width=0.7\textwidth]{./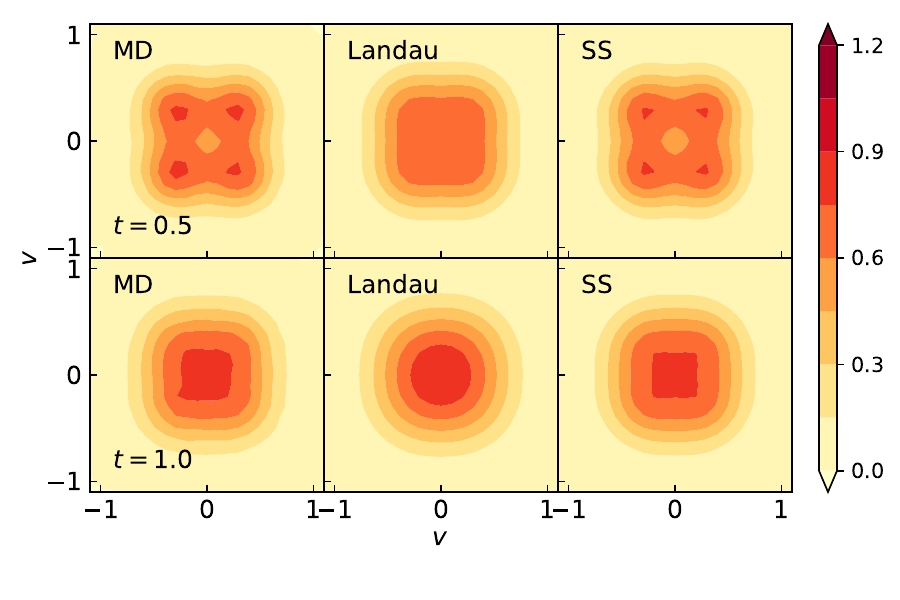}
    \caption{Predictions of the instantaneous velocity distribution on the $v_x$-$v_y$ plane at $t=0.5$ and $1.0$ with the initial distribution taking the GMM model defined by Eq. \eqref{eq:GMM_dist}. ``SS'' represents the present collision kernel with the spectral separation structure. }
    \label{fig:sim_GMM}
\end{figure}

\begin{figure}[H]
    \centering
    \includegraphics[width=0.7\textwidth]{./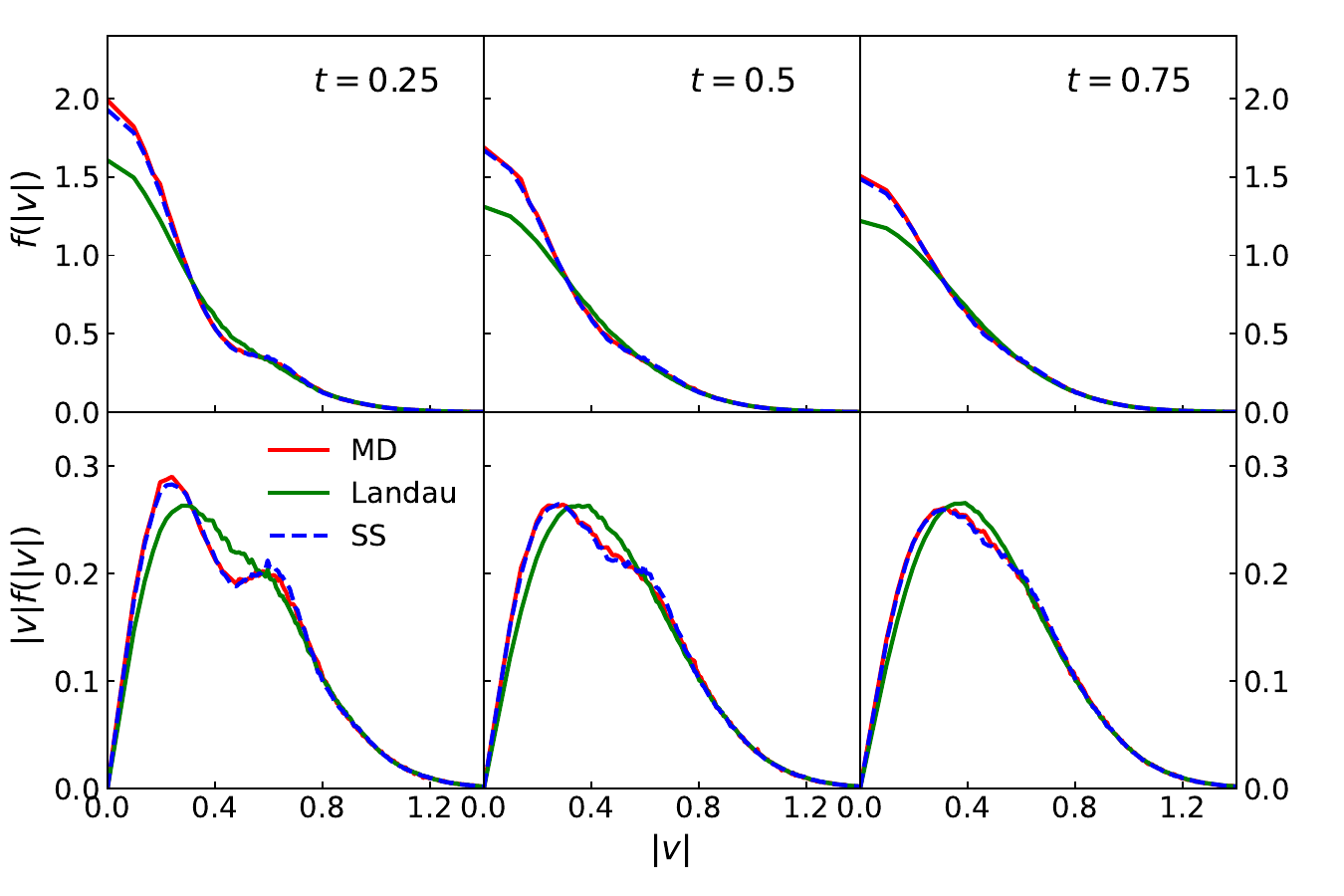}
    \caption{Predictions of the instantaneous velocity distribution along the radial direction with the initial condition taking the RM distribution defined by Eq. \eqref{eq:RM_dist}. }
    \label{fig:sim_radial}
\end{figure}

\subsection{Computational efficiency}
Next, we examine the computational efficiency of the present model. As discussed in Sec. \ref{sec:generalized_kernel}, direct simulation of the kinetic equation \eqref{eq:collision_symplectic},
\tcb{i.e. straightforward finite-difference discretization and evolution of the generalized non-stationary collision kernel $\boldsymbol{\omega}(\boldsymbol{v},\boldsymbol{v}')$ by Eq. \eqref{eq:CM2}, without using the FFT-based acceleration enabled by the spectral separation structure, requires a computational complexity of $\cO(N^{2})$ per time step and is prohibitively expensive for high-resolution simulations.} 
Instead, the present model with a low-rank tensor representation \eqref{eq:CM2_sep} enables the efficient evaluation of the collision integrals in Eqs. \eqref{eq:I1} and \eqref{eq:I2} via FFT, reducing the computational complexity to $\cO(N\log N)$. 
Figure \ref{fig:time} compares the simulation cost per timestep between the direct method and the fast spectral separation scheme, demonstrating significant computational savings.
\begin{figure}[H]
    \centering
    \includegraphics[width=0.45\textwidth]{./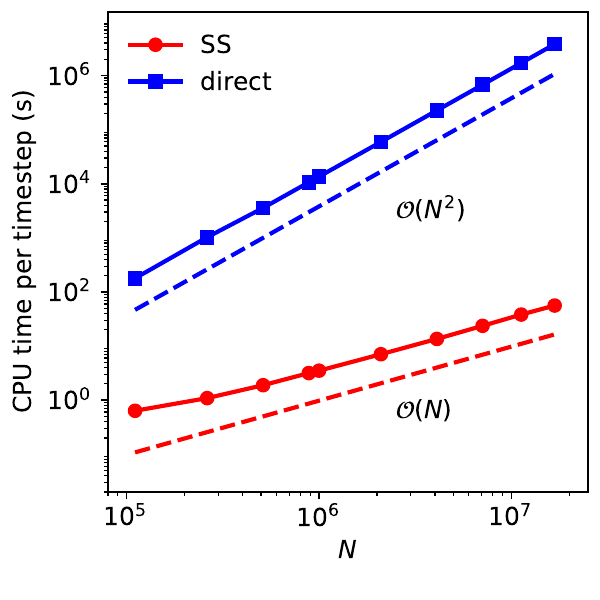}
    \caption{The simulation time per timestep of the direct simulation method and the present spectral separation (SS) method. The dotted lines represent $\cO(N)$ and $\cO(N^{2})$.}
    \label{fig:time}
\end{figure}

\subsection{Structure-preserving numerical results}

The present collision model with the SS representation strictly preserves the conservation laws and physical constraints, and therefore enables the further development of structure-preserving numerical schemes presented in Sec. \ref{subsec:discretization}. 
Fig. \ref{fig:err_MPE} presents the time evolution of the discrete errors in total mass $M$, momentum components $P_{i}$, and kinetic energy $K$, calculated from simulations initialized with the GMM and the RM distribution. For both cases, the numerical errors are consistently maintained at or below the machine precision throughout the simulations.
\begin{figure}[H]
    \centering
    \includegraphics[width=0.8\textwidth]{./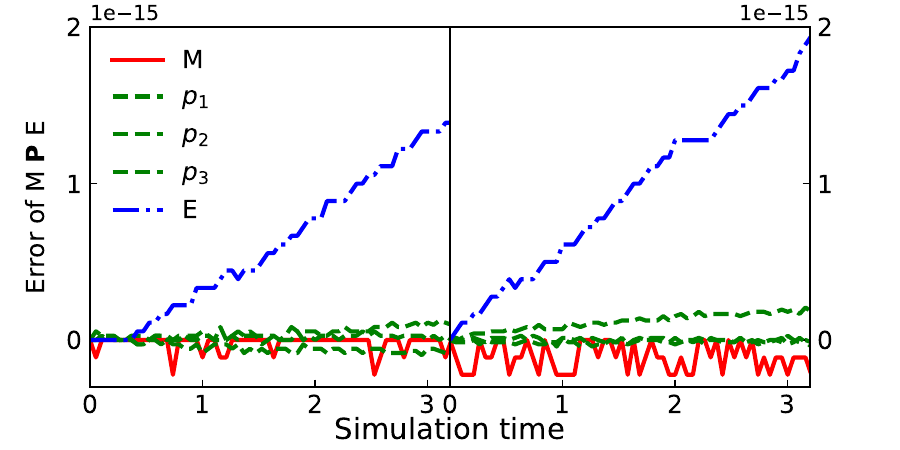}
    \caption{The error of total mass $M$, momentum $\boldsymbol{P}$, and kinetic energy $E$ along with the simulation time. The grid number is $N=128^{3}$, and the initial condition takes the GMM (left) and RM (right) distribution.}
    \label{fig:err_MPE}
\end{figure}

To numerically verify the H-theorem, we present the time evolution of the entropy for various grid sizes. As shown in Fig. \ref{fig:entropy}, the numerical results exhibit non-negative entropy production for both the GMM and the RM case. In particular, the GMM case takes a longer relaxation process since the initial distribution has a larger deviation from the Maxwellian distribution. On the other hand, we emphasize that the present numerical scheme adopts explicit time discretization and is insufficient for the theoretical guarantee of the H-theorem. Numerical methods based on implicit time discretization will be pursued in the future work. 

\begin{figure}[H]
    \centering
    \includegraphics[width=0.8\textwidth]{./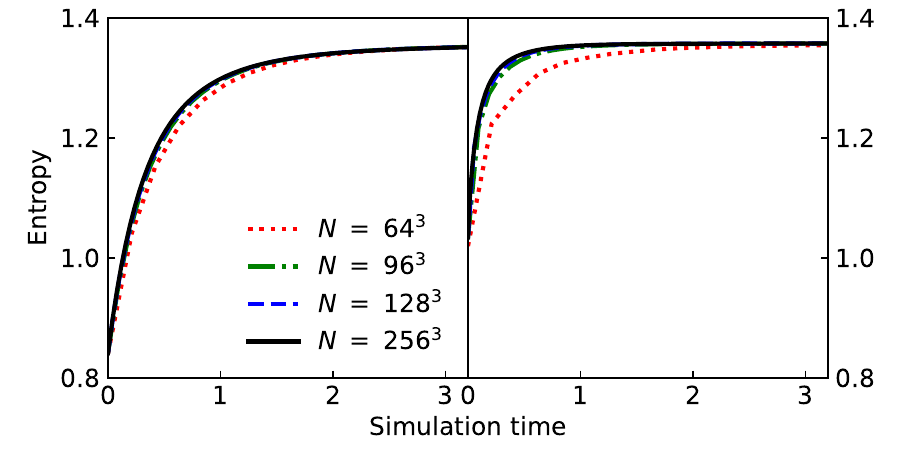}
    \caption{The time evolution of entropy with different grid sizes. The initial condition takes the GMM (left) and RM (right) distributions.}
    \label{fig:entropy}
\end{figure}

To examine the numerical convergence of the proposed SS method and discretization scheme, we quantify the relative errors in the $L_{1}$, $L_{2}$, and $L_{\infty}$ norms at different times.
The solution computed on the finest grid with $N = 256^{3}$ is taken as the reference solution, denoted by $f_{ref}(\boldsymbol{v}_{\boldsymbol{i}},t)$.
The relative error is defined as
\begin{equation}
    \begin{aligned}
        err_{1}(t) &= \dfrac{\sum_{\boldsymbol{v}_{\boldsymbol{i}}} |f_{h}(\boldsymbol{v}_{\boldsymbol{i}},t) - f_{ref}(\boldsymbol{v}_{\boldsymbol{i}},t)|}{\sum_{\boldsymbol{v}_{\boldsymbol{i}}} |f_{ref}(\boldsymbol{v}_{\boldsymbol{i}},t)|} , \\
        err_{2}(t) &= \left( \dfrac{\sum_{\boldsymbol{v}_{\boldsymbol{i}}} |f_{h}(\boldsymbol{v}_{\boldsymbol{i}},t) - f_{ref}(\boldsymbol{v}_{\boldsymbol{i}},t)|^{2}}{\sum_{\boldsymbol{v}_{\boldsymbol{i}}} |f_{ref}(\boldsymbol{v}_{\boldsymbol{i}},t)|^{2}} \right)^{1/2} , \\
        err_{\infty}(t) &= \dfrac{\max_{\boldsymbol{v}_{\boldsymbol{i}}} |f_{h}(\boldsymbol{v}_{\boldsymbol{i}},t) - f_{ref}(\boldsymbol{v}_{\boldsymbol{i}},t)|}{\max_{\boldsymbol{v}_{\boldsymbol{i}}} |f_{ref}(\boldsymbol{v}_{\boldsymbol{i}},t)|} .
    \end{aligned}
\end{equation}

Fig. \ref{fig:error_L2} presents the evolution of the $L_{2}$ relative errors in the velocity distribution for various mesh resolutions, initialized with the GMM and RM models, respectively. The numerical simulation error decreases with increasing grid resolution, and the error further diminishes for $t>1$ as the velocity distribution progressively converges to the Maxwellian equilibrium. 

\begin{figure}[H]
    \centering
    \includegraphics[width=0.8\textwidth]{./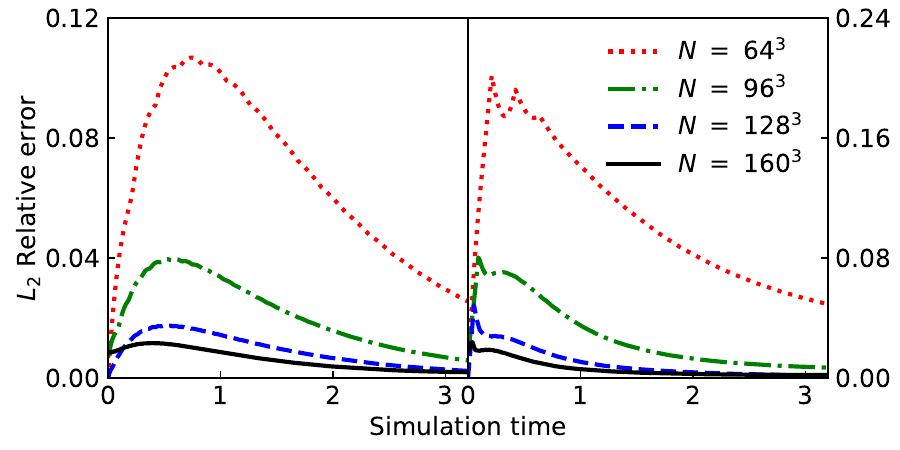}
    \caption{The $L_{2}$ simulation error with different mesh sizes, compared with the finest mesh $N=256^{3}$, beginning with two initial distributions, GMM (left) and RM (right).}
    \label{fig:error_L2}
\end{figure}

To assess the convergence quantitatively, we compare the $L_{2}$ relative error as a function of mesh size $h$ for both initial distribution models at $t=1$ and $t=2$, as shown in Fig. \ref{fig:convergence}. The numerical results indicate that the $L_{2}$ error exhibits a convergence rate between $\cO(h^{2})$ and $\cO(h^{3})$, with the $L_{1}$ and $L_{\infty}$ errors also falling within this range. \tcr{In practice, the convergence rate is slightly better than the expected $O(h^2)$. This is perhaps due to 
the relaxation nature, where the evolution eventually converges to the Maxwellian distribution}.
The corresponding $L_{1}$ and $L_{\infty}$ errors at different mesh resolutions are summarized in Table \ref{table:errorL1} and \ref{table:errorLinf}.

\begin{figure}[H]
    \centering
    \includegraphics[width=0.8\textwidth]{./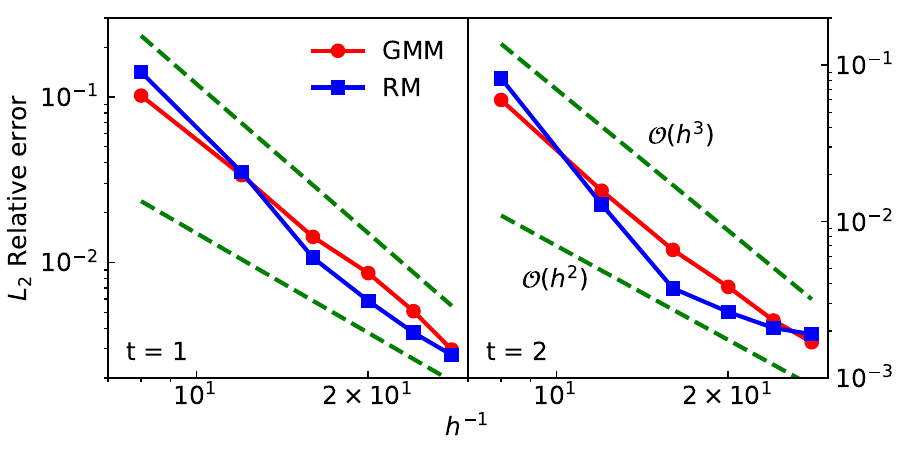}
    \caption{The $L_{2}$ relative error of two initial models at $t=1$ and $t=2$ with different mesh sizes.}
    \label{fig:convergence}
\end{figure}

\begin{table}[H]
    \centering
    \begin{tabular}{c c c c c}
         $h$ & GMM($t=1$) & GMM($t=2$) & RM($t=1$) & RM($t=2$) \\\hline
         $1/8$ & $7.95\times 10^{-2}$ & $4.35\times 10^{-2}$ & $1.37\times 10^{-1}$ & $8.47\times 10^{-2}$ \\
         $1/12$ & $2.57\times 10^{-2}$ & $1.25\times 10^{-2}$ & $3.69\times 10^{-2}$ & $1.47\times 10^{-2}$ \\
         $1/16$ & $1.08\times 10^{-2}$ & $5.27\times 10^{-3}$ & $1.13\times 10^{-2}$ & $3.82\times 10^{-3}$ \\
         $1/20$ & $7.00\times 10^{-3}$ & $3.65\times 10^{-3}$ & $5.48\times 10^{-3}$ & $2.03\times 10^{-3}$ \\
         $1/24$ & $4.38\times 10^{-3}$ & $2.44\times 10^{-3}$ & $3.22\times 10^{-3}$ & $1.74\times 10^{-3}$ \\
         $1/28$ & $2.99\times 10^{-3}$ & $1.97\times 10^{-3}$ & $2.21\times 10^{-3}$ & $1.64\times 10^{-3}$
    \end{tabular}
    \caption{$L_{1}$ relative error at different mesh resolutions, beginning with GMM and RM models at $t=1$ and $t=2$.}
    \label{table:errorL1}
\end{table}

\begin{table}[H]
    \centering
    \begin{tabular}{c c c c c}
         $h$ & GMM($t=1$) & GMM($t=2$) & RM($t=1$) & RM($t=2$) \\\hline
         $1/8$ & $2.76\times 10^{-1}$ & $1.94\times 10^{-1}$ & $1.24\times 10^{-1}$ & $8.81\times 10^{-2}$ \\
         $1/12$ & $1.19\times 10^{-1}$ & $5.79\times 10^{-2}$ & $3.26\times 10^{-2}$ & $2.17\times 10^{-2}$ \\
         $1/16$ & $5.22\times 10^{-2}$ & $2.25\times 10^{-2}$ & $9.64\times 10^{-3}$ & $1.05\times 10^{-2}$ \\
         $1/20$ & $2.49\times 10^{-2}$ & $1.06\times 10^{-2}$ & $8.53\times 10^{-3}$ & $6.10\times 10^{-3}$ \\
         $1/24$ & $1.25\times 10^{-2}$ & $5.06\times 10^{-3}$ & $7.71\times 10^{-3}$ & $5.86\times 10^{-3}$ \\
         $1/28$ & $5.39\times 10^{-3}$ & $2.10\times 10^{-3}$ & $4.79\times 10^{-3}$ & $3.49\times 10^{-3}$ \\
    \end{tabular}
    \caption{$L_{\infty}$ relative error at different mesh resolutions, beginning with GMM and RM models at $t=1$ and $t=2$.}
    \label{table:errorLinf}
\end{table}

\section{Summary}\label{sec:summary}

This study addresses an essential limitation of classical collisional kinetic model by introducing a generalized collision operator capable of accurately modeling plasma dynamics beyond the weakly coupled regime.
The proposed generalized collision operator incorporates an anisotropic and non-stationary kernel learned directly from MD simulations. This enables the new operator to capture the broadly overlooked anisotropic energy transfer arising from the collective interactions between the pair of colliding particles and the environment, where the traditional Landau formulation shows limitations.

To overcome the computational challenges due to the non-stationary form of the generalized operator, a fast spectral separation method is developed by seeking a low-rank tensor representation of the generalized collision kernel. This enables the efficient evaluation of the collision operator by exploiting the convolution structure of individual integral terms, and thereby reduces the computational complexity from $\cO(N^{2})$ to $\cO(N \log N)$.
The constructed operator strictly preserves the conservation laws and H-theorem, and enables the further development of structure-preserving numerical schemes. 

Numerical results demonstrate that the present model improves the Landau operator in the accuracy of the model while maintaining high computational efficiency. Various physical constraints can be numerically preserved at the discrete level. 
This work achieves an advance toward accurate and efficient modeling of collisional plasmas over a broader range of physical conditions. Future work will be devoted to developing an implicit numerical scheme with the theoretical guarantee of the H-theorem and constructing the full kinetic model with a generalized collisional operator that further accounts for spatial inhomogeneity.


\appendix
\section{Physical regime of plasma kinetics}\label{sec:physical_regime}

In this work, we investigate the kinetic processes of one-component plasma systems.
Following the standard collisional kinetic theory \cite{hinton1983collisional}, we use the dimensionless coupling parameter $\Gamma$ to characterize the degree of coupling between the potential and kinetic energy, i.e.,  
\begin{equation}
    \Gamma = \dfrac{q_{e}^{2}}{4\pi \epsilon_{0} k_{B} T}(4\pi n/3)^{1/3} ,
\end{equation}
where $q_{e}$ denotes the particle charge, $\epsilon_{0}$ is the vacuum permittivity, $k_{B}$ is the Boltzmann constant, $T$ is the temperature, and $n$ is the particle number density.

This parameter serves as a metric for the physical regime of the plasma kinetics. For $\Gamma \ll O(1)$, the plasma is in the weakly coupled regime (e.g., high-temperature, low-density), and the small-angle scattering and binary collisions are the dominant interaction mechanisms. The Landau equation provides a valid approximation that remains valid.
For $\Gamma \sim O(1)$, the plasma is in the moderately coupled regime, and the effects of the particle correlation become non-negligible. The Landau equation is insufficient to characterize the plasma kinetics due to the oversimplification of the collisional interactions in the form of a homogeneous kernel. In this study, we focus on this challenging regime and aim to accurately model the plasma kinetics that accounts for the unresolved particle correlations in the form of a generalized heterogeneous non-stationary collisional kernel \eqref{eq:CM2} \eqref{eq:CM2_sep}. 

\section{MD settings}
\label{sec:MD}
In this study, MD simulations are performed by using $N_{MD} = 10^{6}$ particles confined in a cubic box of side length $100~\AA$.
The particles interact via an unscreened Coulomb interaction, with each particle carrying a monovalent charge.
Under this condition, the coupling parameter of the system is $\Gamma=2.3$, corresponding to the moderately coupled plasma regime.
The MD simulations are initialized with various velocity distributions and equilibrium configurations.

In order to simplify the processing of neural network training and numerical simulations, we introduce characteristic scales for velocity and time
\begin{equation}
    v_{0} = a \omega_p \approx 81700 ~ \text{m/s}, \qquad t_{0} = \omega_{p}^{-1}\approx 0.76 ~ \text{fs} ,
\end{equation}
where $a = \left(3/4\pi n\right)^{1/3}$ is the Wigner-Seitz radius and $\omega_p = (n e^2/m \epsilon_0 )^{1/2}$ is the plasma frequency.
Velocity and time are nondimensionalized via $\boldsymbol{v} = \boldsymbol{v}_{MD}/v_{0}$, $t=t_{MD}/t_{0}$, and the variance of the velocity in $3$-D is $0.433$.

\section{Training settings}
\label{sec:training}

Only $3$ MD trajectories (in reduced units), with initial particle velocity following the uniform and the bi-Maxwellian distributions, are used as the training data with number of particles $N=10^{6}$, to learn the collision kernel in Eq. \eqref{eq:CM2_sep}. We emphasize that the GMM and RM models (see Eq. \eqref{eq:GMM_RM_dist}) are excluded from the training set and will be only used for the validation presented in Sec. \ref{sec:numerical}.  

\tcr{For the test functions $\psi_k(\boldsymbol{v})$ in Eq. \eqref{eq:loss}, we select $\exp[-(\boldsymbol{v}-\mu)^2/2\sigma^{2}]$, $\alpha~\boldsymbol{v}^2 \exp(-\boldsymbol{v}^2/2\sigma^{2})$, $\exp[-(\boldsymbol{v}^2 - \mu^{2})^2/2\sigma^{2}]$ with parameters $\mu$, $\sigma$, designed to cover the relevant regions of velocity space distribution based on the system temperature.
And we choose $k=5$ with $\psi_k(\boldsymbol{v})$ in Eq. \eqref{eq:loss}, with $\psi_{k}(\boldsymbol{v}) = \exp(-6~\boldsymbol{v}^2)$, $6~\boldsymbol{v}^2 \exp(-1.2\boldsymbol{v}^2)$, $\exp[-(6~\boldsymbol{v}^2 - 1.0)^2]$, $\exp[-(2.5~\boldsymbol{v} - 0.2)^2]$, and $\exp[-(2.5~\boldsymbol{v} - 0.5)^2]$, where the learned collision kernel and the resulting macroscopic observables are not sensitive to the test functions.} 
\tcb{Furthermore, we have tested different numbers and families of test functions and observed that the learned kernel and the predictions of the kinetic dynamics remains consistent.}

\tcr{The encoder functions $\{\mathcal{L}_{\ast}^{j'}, \mathcal{M}_{\ast}^{j'}, \mathcal{N}_{\ast}^{j'}\}_{j'=1}^{J'}$ are constructed as fully connected deep neural networks, each with: $5$ hidden layers, $10$ neurons per layer, and the activation function is set as the sigmoid function.
We choose $J'=4$, and use $3$ independent MD trajectories and $5$ test functions with batch size $P = 10^{6}$ in the training process.}

\section{Simulation settings}
\label{sec:simulation_settings}
For numerical simulations by using the finite central difference method, the computational domain is set as $[-L/2, L/2]^{3}$ with $L=8$.
A range of spatial mesh sizes is considered: $h=1/6$, $1/8$, $1/10$, $1/12$, $1/16$, $1/20$, $1/24$, $1/28$, and $1/32$, corresponding to the grid resolutions of $N=48^{3}$, $64^{3}$, $80^{3}$, $96^{3}$, $128^{3}$, $160^{3}$, $192^{3}$, $224^{3}$, and $256^{3}$, respectively.
The simulation time step is chosen to scale with the mesh size as $dt \propto h^{2}$ to ensure numerical stability and accuracy.

The initial GMM and RM distributions take the form 
\begin{equation}
    \begin{aligned}
        f_{GMM}(\boldsymbol{v},0) &= 1.65 \prod_{i=1}^{3} \left[ \exp\left(-\dfrac{|v_{i}-0.34|^{2}}{0.057}\right) + \exp\left(-\dfrac{|v_{i}+0.34|^{2}}{0.057}\right) \right] , \\
        f_{RM}(\boldsymbol{v},0) &= 1.254 \exp(-3.535~ \vert \boldsymbol{v}\vert ^{2}) \cos(7.07~\vert \boldsymbol{v}\vert ^{2})^{2} .
    \end{aligned}
    \label{eq:GMM_RM_dist}
\end{equation}

\tcb{The comparison of the absolute error of the Landau and the present model of the GMM case is shown in Fig. \ref{fig:GMM_abserr}; see also Fig. \ref{fig:sim_GMM} for the time evolution of the $f(\bm v, t)$ predicted by the full MD, the Landau, and the present model. These results show that the present model achieves significantly smaller error.}
\begin{figure}[H]
    \centering
    \includegraphics[width=0.45\textwidth]{./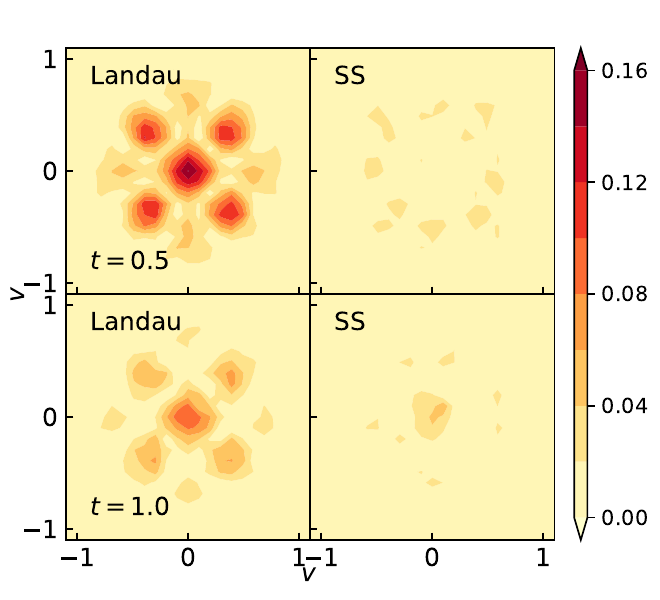}
    \caption{Absolute error $\vert f - f_{\rm MD}$ of simulation results between Landau vs MD (left), and SS vs MD (right) on the $v_{x}-v_{y}$ plane at $t=0.5$ and $1.0$, with the initial distribution taking the GMM model.}
    \label{fig:GMM_abserr}
\end{figure}

\section*{Acknowledgments}
We acknowledge Prof. Guosheng Fu for insightful discussions. The work is supported in part by the National Science Foundation under Grant DMS-2110981 and DMS-2143739, and the Department of Energy under Grant No. DOE-DESC0023164  and the ACCESS program through allocation MTH210005. 
The authors also acknowledge the support from the Institute for Cyber-Enabled Research at Michigan State University.






\end{document}